\providecommand{\U}[1]{\protect\rule{.1in}{.1in}}
\providecommand{\U}[1]{\protect\rule{.1in}{.1in}}
\newtheorem{theorem}{Theorem}
\newtheorem{proposition}[theorem]{Proposition}
\newenvironment{proof}[1][Proof]{\noindent\textbf{#1.} }{\ \rule{0.5em}{0.5em}}
\begin{document}
$\ $

\vspace{-1.cm}

\begin{center}
{\Large \textbf{The Six Cylinders Problem: $\mathbb{D}_{3}$-symmetry\\[.2em]
Approach}}

\vspace{.3cm} {\large \textbf{Oleg Ogievetsky$^{\diamond\,\ast}$\footnote{Also
at Lebedev Institute, Moscow, Russia.} and Senya Shlosman$^{^{\diamond}
\,\dag\,\ddagger}$}}

\vskip .3cm $^{\diamond}$Aix Marseille Universit\'{e}, Universit\'{e} de
Toulon, \newline CNRS, CPT UMR 7332, 13288, Marseille, France

\vskip .05cm $^{\dag}$Inst. of the Information Transmission Problems, RAS,
Moscow, Russia

\vskip .05cm $^{\ddagger}$ Skolkovo Institute of Science and Technology,
Moscow, Russia

\vskip .05cm $^{\ast}${Kazan Federal University, Kremlevskaya 17, Kazan
420008, Russia}
\end{center}

\vskip 2cm
\begin{minipage}[t]{.9\textwidth}{\small
\centerline{\bf Abstract}
\vskip .6cm
Motivated by a question of W. Kuperberg, we study the 18-dimensio\-nal manifold
of configurations of 6 non-intersecting infinite cylinders of radius $r,$ all
touching the unit ball in $\mathbb{R}^{3}.$ We find
a configuration with
\[ r=\frac{1}{8}\left(  3+\sqrt{33}\right)  \approx1.093070331\ .\]
We believe that this value is the maximum possible.
}\end{minipage}

\newpage
\section{Introduction}
$\ $
\vskip .1cm
The question: -
How many non-intersecting unit right circular (open) cylinders of infinite length can touch a unit ball? 
\noindent - was asked by W. Kuperberg, \cite{K}. 

\vskip .3cm
Kuperberg presented several arrangements of 6
non-intersecting unit cylinders touching the unit ball; it is difficult
to imagine that 7 unit cylinders of infinite length can do it, though no proof of this statement
is known; see \cite{HS} for the proof that 8 unit non-intersecting cylinders of infinite length cannot touch the unit ball. 

\vskip .3cm
At first glance one can even think that 6 non-intersecting cylinders
of radius $r>1$ cannot touch the unit ball. This, however, is not the case,
and an example was presented by M. Firsching in his thesis, \cite{F}. In this
example the radius $r$ equals $1.049659.$ This example was obtained by a
numerical exploration of the corresponding 18-dimensional configuration manifold.

\vskip.3cm 
The situation thus is somewhat similar to the case of 12 unit
balls touching the central unit ball. There one can similarly ask whether 13
unit balls can do it (the answer is negative, \cite{SW}), or whether 12 balls
of bigger radius $r>1$ can touch the central unit ball. The answer to the
latter question is positive: it is known that 12 balls of radius 
$$r=\left( \sqrt{\frac{5+\sqrt{5}}{2}}-1\right)  ^{-1}\approx1.10851\ ,$$ 
positioned at the 12 vertices of the icosahedron with edge $2r,$ touch the central 
unit ball.

\vskip.3cm 
This fact makes it plausible that the two very symmetric configurations
of 12 unit balls touching the central unit one -- the \text{FCC} (Face
Centered Cubic) and the \text{HCP} (Hexagonal Closed Packed) configurations
(see Figures \ref{FCC} and \ref{HCP} for explanation) -- can be
\textit{unlocked} by rolling the 12 balls over the central one to a
configuration where none of the 12 balls touch each other. 
This is indeed correct; see Chapter VII, \S$\,$2 in \cite{T} and \S$\,$ 8.4 in \cite{C} for the configuration FCC, and  
\cite{KKLS} for the configuration HCP.

\vspace{-0.4cm}

\begin{figure}[H]
\centering
\includegraphics[scale=0.26]{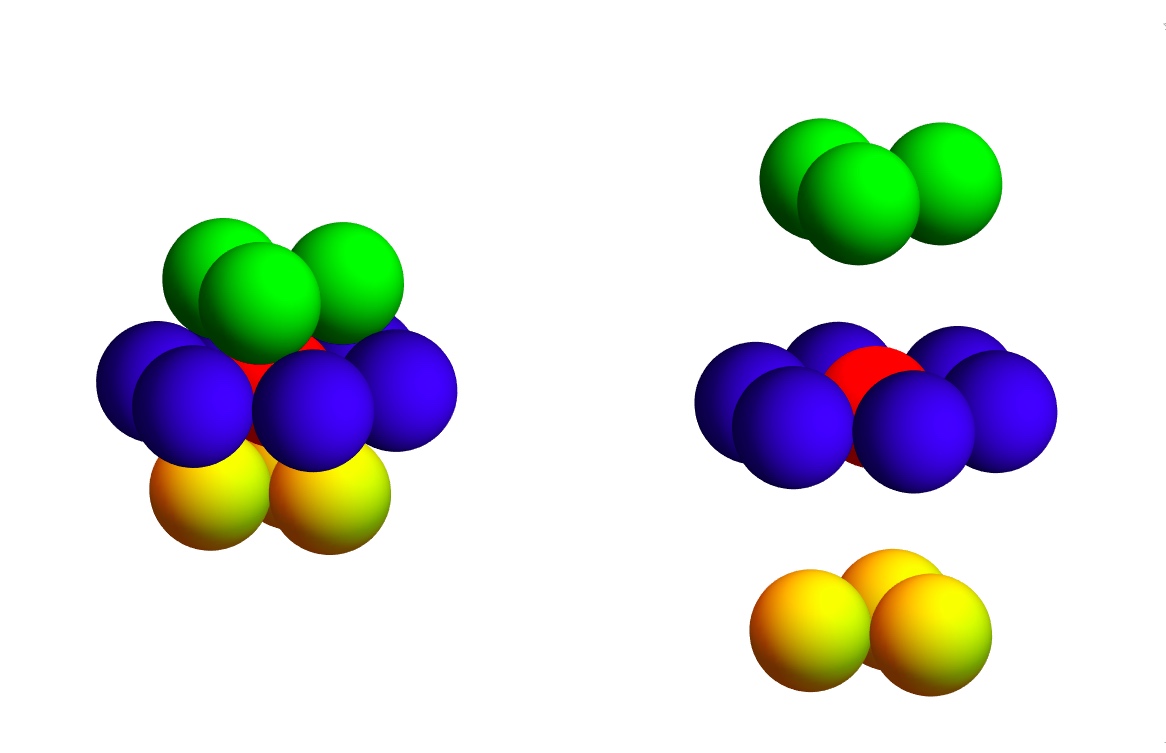} \caption{FCC configuration (left) and
its layers (right)}
\label{FCC}
\end{figure}

\vspace{-0.4cm} \begin{figure}[H]
\centering
\includegraphics[scale=0.32]{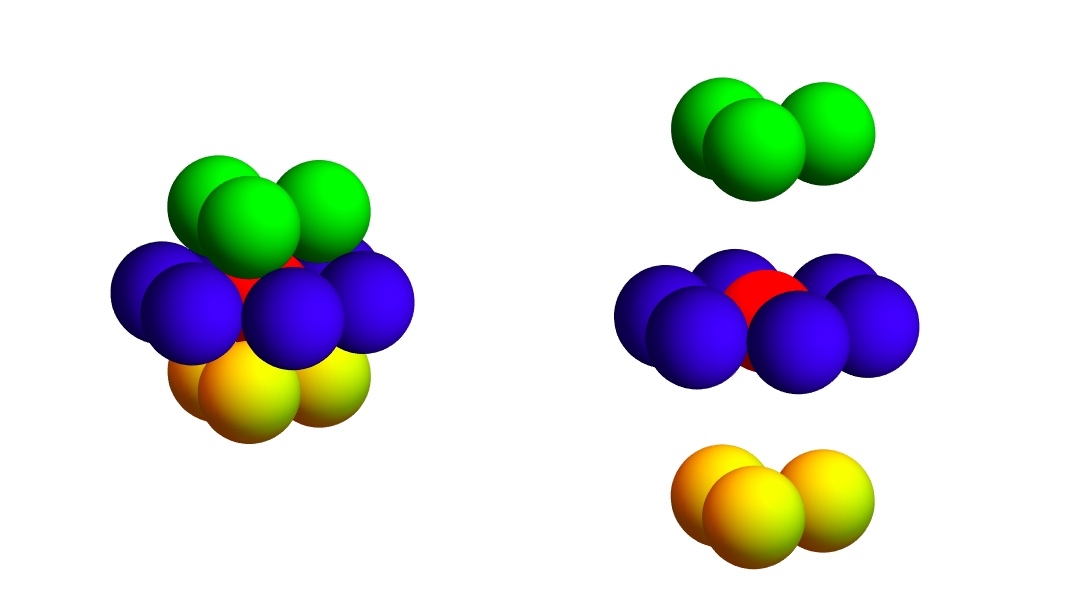} \caption{HCP configuration (left) and
its layers (right)}
\label{HCP}
\end{figure}

The precise meaning of  \textit{unlocking} is the following. Let $G$ be a
collection of solid bodies, $G=\left\{  \Lambda_{1},...\Lambda_{k}\right\}  ,$
where each $\Lambda_{i}$ touches the unit central ball, while some distances
between bodies of $G$ are zero. We say that $G$ can be unlocked if there
exists a continuous deformation $G\left(  t\right)  ,$ $t\geq0,$ of $G$ (i.e.
$G\left(  0\right)  =G$), such that for any $t>0$ all the distances between
the members in the configuration $G\left(  t\right)  $ are positive, while
each $\Lambda_{i}$ touches the central ball while moving.

\vskip.2cm In the present paper we address a similar question -- of unlocking
the configuration of six unit parallel (right circular) cylinders, touching
the central unit ball. We denote this configuration by $C_{6}$, see Figure
\ref{confC6}.

\vspace{0.4cm} \begin{figure}[ht]
\centering
\includegraphics[scale=0.12]{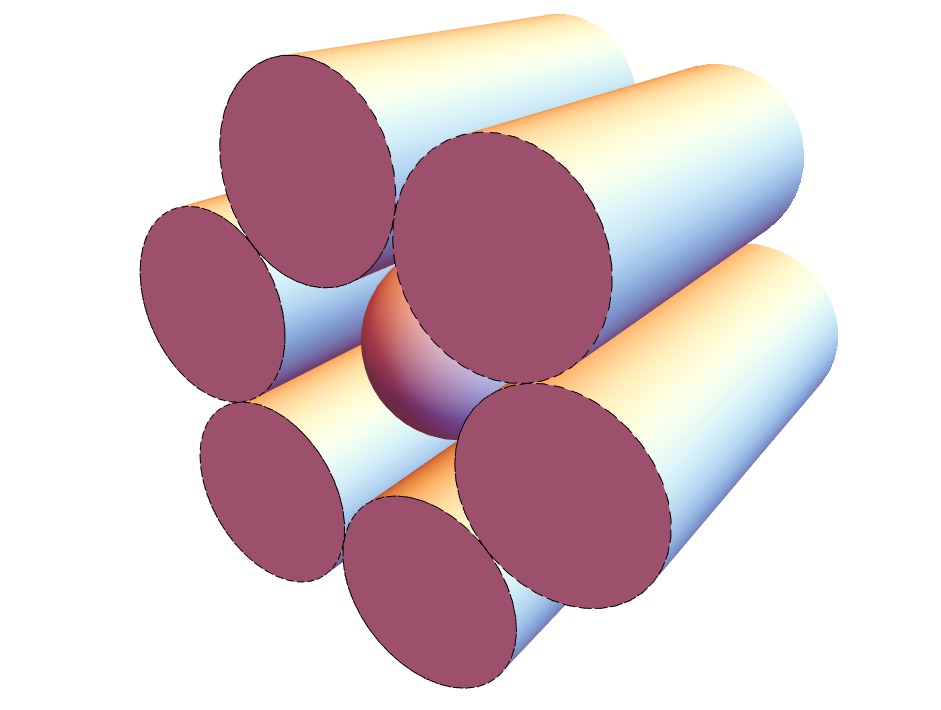} \caption{Configuration
$C_{6}$}
\label{confC6}
\end{figure}

The configuration $C_{6}$ is not rigid. Indeed, let $H\subset\mathbb{R}^{3}$
be a half-space, containing three cylinders of $C_{6},$ and $h$ be the normal
vector to the plane $\partial H.$ Then one can rotate the three cylinders
about $h,$ keeping the remaining three intact, see Figure \ref{nonrigconfC6}.

\vspace{0.4cm} \begin{figure}[ht]
\centering
\includegraphics[scale=0.23]{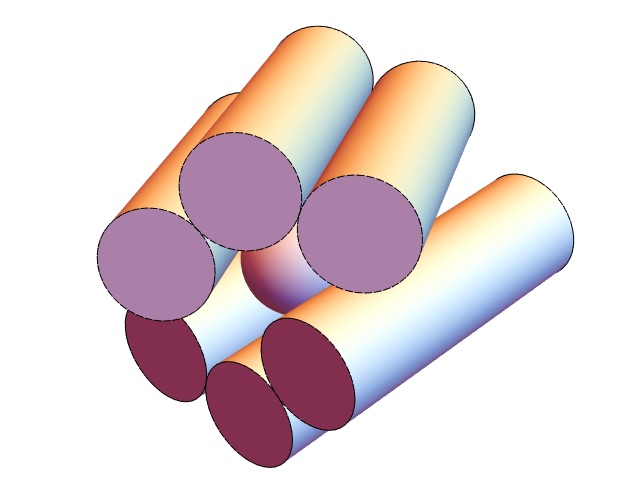}
\caption{Non-rigidity of $C_{6}$}
\label{nonrigconfC6}
\end{figure}

So our configuration $C_{6}$ is movable, but this is not yet the unlocking,
since some distances stay zero. We will demonstrate that the configuration
$C_{6}$ is indeed unlockable. Namely, we will present its continuous
deformation $C_{6}\left(  t\right)  $, along which quite a spacing opens
between the cylinders, so at some value of $t$ it becomes possible to arrange
6 non-intersecting cylinders of radius
\begin{equation}
r_{\mathfrak{m}}=\frac{1}{8}\left(  3+\sqrt{33}\right)  \approx1.093070331.
\label{30}
\end{equation}

We believe that our configuration of 6 cylinders with radius $r_{\mathfrak{m}}$
is in fact optimal. In a forthcoming publication \cite{OS} we are going to
show that our cylinder arrangement with value
$r_{\mathfrak{m}}$, is a local maximum, i.e. any small perturbation
of our configuration decreases the corresponding radius.

\vskip .2cm
The search of the maximum radius $r$ is equivalent to finding a point in a
certain 18-dimensional manifold $M^{6},$ see the definition $\left(
\ref{25}\right)  $ below, where the minimum of 15 mutual distances attains its
maximum value. Guided by our belief that the optimal configuration should
possess nice symmetries, we restricted our search to a certain 3-dimensional
submanifold $\mathcal{C}^{3}=C_{6}\left(  \varphi,\delta,\varkappa\right)  $
of $M^{6}$, see the definition $\left(  \ref{defconf}\right)  $ below,
consisting of the fixed points of the action of the group $\mathbb{D}
_{3}\subset SO\left(  3\right)  $ on $M^{6},$ i.e. by $\mathbb{D}_{3}
$-symmetric configurations. On $\mathcal{C}^{3}$, only 4 of 15 distances are
different, and only 3 of them are relevant. Our next reduction comes from the
observation that the situation when three `nice' functions $g_{1},g_{2},g_{3}$
on a three-dimensional manifold $N$ coincide on a smooth curve $\gamma$ is a
general position situation, as the dimension counting immediately shows. In
such a case the point $x_{\mathfrak{m}}\in N$ at which the $\max$ of the function
$\min_{i}\left\{  g_{i}\left(  x\right)  \right\}  $ is attained, belongs to
$\gamma.$ It so happens that our case (with $g_{1},g_{2},g_{3}$ being the
three relevant distances) falls into it, with $\gamma=C_{6}\left(
\varphi,\delta\left(  \varphi\right)  ,\varkappa\left(  \varphi\right)
\right)  ,$ for certain functions $\delta\left(  \varphi\right)
,\varkappa\left(  \varphi\right)  $. What is left then is the study of a
single function $g_{i}{|}_{\gamma}$ of one variable. We were able to explicitly describe this curve
$\gamma\subset M^{6}$ and to compute the maximum value $r_{\mathfrak{m}}$ of
the function $r$ on it. It gives a lower bound for the maximum radius $r$ possible.

\vskip.2cm We also analyze the generalized situation, with $2n$ cylinders
instead of 6. We show that it can be unlocked for $n>2$ along our curve. For
$n=2$ the configuration is not rigid but it is not unlockable along our curve.
However, we conjecture that all possible configurations of four cylinders
belong to the curve.

\vskip.2cm The description of our 18-dimensional manifold $M^{6}$ and the
choice of coordinates there are given in the next section. Section 3 contains the
definition of the submanifold $\mathcal{C}^{3}\subset M^{6}$ and the
formulation of our main result. The optimization problem on $\mathcal{C}^{3}$
is solved in Sections 4 and 5, thus proving our main theorem. In Section 6 we
consider the problem of $n$ equal cylinders touching the unit ball.
The last Section 7 contains our conclusions.

\vskip.2cm 
We finish the introduction by the brief history of how the present
paper was evolving. Our first goal was to convince ourselves that the
configuration $C_{6}$ is \textit{infinitesimally} unlockable (see Proposition
3). Next, we were trying to analyze the humongous trigonometric formulas for
the functions $g_{i}\left(  x\right) $, and we used both Wolfram Mathematica
\cite{W} and the analog machinery:

\begin{figure}[th]
\centering
\includegraphics[scale=0.06]{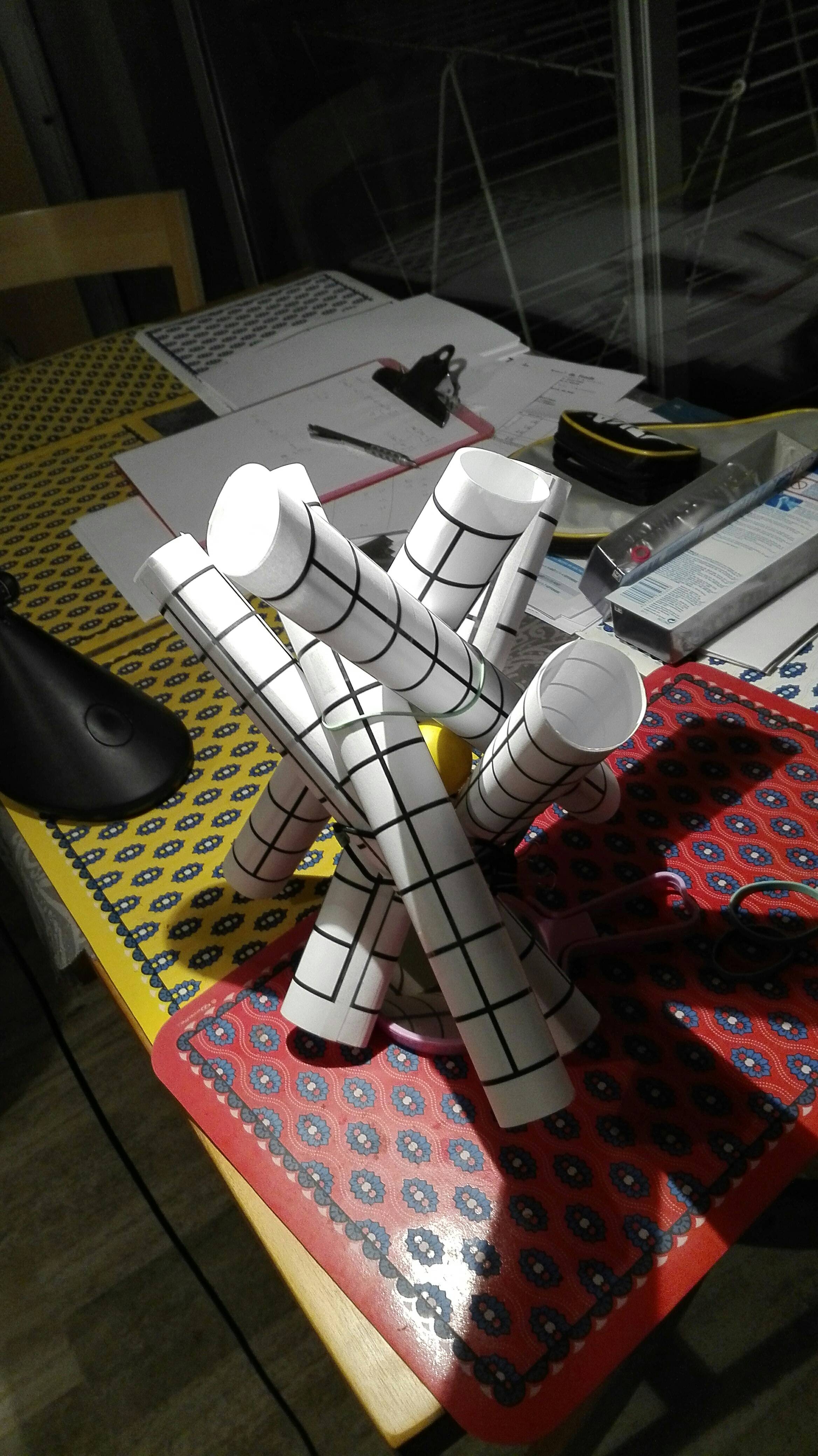}
\caption{The analog computer. The yellow ball is visible in the center.}
\end{figure}
\noindent to solve the minimax problem numerically. We got an estimate
$1.09$ for $r_{\mathfrak{m}}.$ The last phase came with the realization that
it is possible to pass from trigonometric expressions to algebraic ones,  such
that our minimax problem
becomes \textit{`integrable'}, i.e. can be solved explicitly. In our view this is quite a surprising feature of the six cylinder
problem, which is beyond our initial expectations. Probably, this points to some
hidden symmetry features of the problem.

\section{The configuration manifold}

Let $\mathbb{S}^{2}\subset\mathbb{R}^{3}$ be the unit sphere, centered at the
origin. For every $x\in\mathbb{S}^{2}$ by $TL_{x}$ we denote the set of all
(unoriented) tangent lines to $\mathbb{S}^{2}$ at $x.$ 
The manifold of tangent lines to $\mathbb{S}^{2}$ we denote by $M$, and
we represent a point in $M$ by a pair $\left(  x,\tau\right)  $, where $\tau$
is a unit tangent vector to $\mathbb{S}^{2}$ at $x,$ though such a pair is not
unique: the pair $\left(  x,-\tau\right)  $ is the same point in $M.$ We shall
use the following coordinates on $M$. Let $\mathbf{x,y,z}$ be the standard
coordinate axes in $\mathbb{R}^{3}$. Let $R_{\mathbf{x}}^{\alpha
},R_{\mathbf{y}}^{\alpha}$ and $R_{\mathbf{z}}^{\alpha}$ be the
counterclockwise rotations about these axes by an angle $\alpha$, viewed from
the tips of axes. We call the point $\mathsf{N}=\left(  0,0,1\right)  $ the
North pole, and $\mathsf{S}=\left(  0,0,-1\right)  $ -- the South pole. By
\textit{meridians} we mean geodesics on $\mathbb{S}^{2}$ joining the North
pole to the South pole. The meridian in the plane $\mathbf{xz}$ with positive
$\mathbf{x}$ coordinates will be called Greenwich. The angle $\varphi$ will
denote the latitude on $\mathbb{S}^{2},$ $\varphi\in\left[  -\frac{\pi}
{2},\frac{\pi}{2}\right]  ,$ and the angle $\varkappa\in\lbrack0,2\pi)$ -- the
longitude, so that Greenwich corresponds to $\varkappa=0.$ Every point
$x\in\mathbb{S}^{2}$ can be written as $x=\left(  \varphi_{x},\varkappa
_{x}\right)  .$ Finally, for each $x\in\mathbb{S}^{2}$, we denote by
$R_{x}^{\alpha}$ the rotation by the angle $\alpha$ about the axis joining
$\left(  0,0,0\right)  $ to $x,$ counterclockwise if viewed from its tip, and
by $\left(  x,\uparrow\right)  $ we denote the pair $\left(  x,\tau
_{x}\right)  ,$ $x\neq\mathsf{N,S,}$ where the vector $\tau_{x}$ points to the
North. We also abbreviate the notation $\left(  x,R_{x}^{\alpha}
\uparrow\right)  $ to $\left(  x,\uparrow_{\alpha}\right)  $.

\vskip .2cm
Let $u=\left(  x^{\prime},\tau^{\prime}\right)  ,$ $v=\left(  x^{\prime\prime
},\tau^{\prime\prime}\right)  $ be two lines in $M$. We denote by $d_{uv}$ the
distance between
$u$ and $v$; clearly $d_{uv}=0$ iff $u\cap v\neq\varnothing.$ If the lines
$u,v$ are not parallel then the square of $d_{uv}$ is given by the formula
\[
d_{uv}^{2}=\frac{\det^{2}[\tau^{\prime},\tau^{\prime\prime},x^{\prime\prime
}-x^{\prime}]}{1-(\tau^{\prime},\tau^{\prime\prime})^{2}}\ ,
\]
where $(\ast,\ast)$ is the scalar product. For the future use we note that if
$d_{uv}=d>0,$
then the cylinders $C_{u}\left(  r\right)  $ and $C_{v}\left(  r\right)  ,$
touching $\mathbb{S}^{2}$ at $x^{\prime},x^{\prime\prime},$ having directions
$\tau^{\prime},\tau^{\prime\prime},$ and radius $r,$ touch each other iff
\begin{equation}
r=\frac{d}{2-d}. \label{11}
\end{equation}
Indeed, if the cylinders touch each other, we have the proportion:
\begin{equation}
\frac{d}{1}=\frac{2r}{1+r}. \label{10}
\end{equation}

We denote by $M^{6}$ the manifold of 6-tuples
\begin{equation}
\mathbf{m}=\left\{  u_{1},...,u_{6}:u_{i}\in M,i=1,...,6\right\}  . \label{25}
\end{equation}
Our interest is in the function
\[
D\left(  \mathbf{m}\right)  =\min_{1\leq i<j\leq6}d_{u_{i}u_{j}}.
\]
We are especially interested in knowing its maximum, since it defines, via
$\left(  \ref{11}\right)  ,$ the maximum radius of
6 non-intersecting equal cylinders touching the unit ball.

\vskip .2cm
The generators of the cylinders in $C_{6}$ touching the ball define a point in
$M^{6}$, shown on Figure \ref{confC6tan}. We denote it by the same symbol
$C_{6}$. Note that
$D\left(  C_{6}\right)  =1$. \vspace{1cm} \begin{figure}[th]
\vspace{-1.2cm} \centering
\includegraphics[scale=0.16]{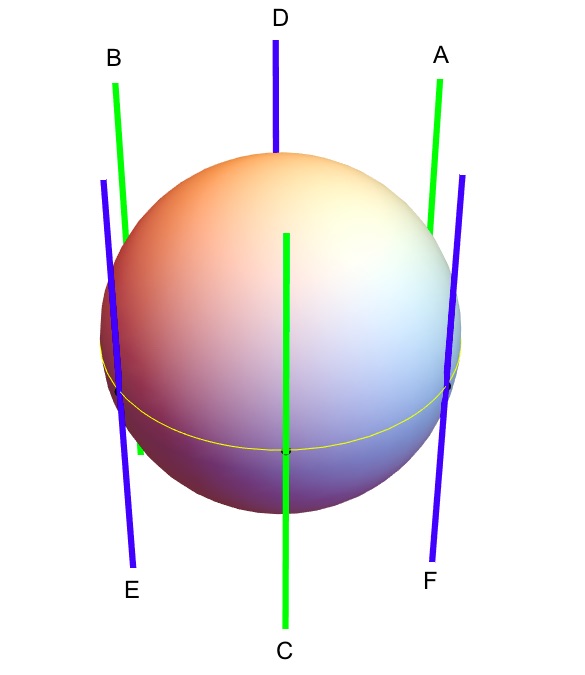}
\caption{Configuration $C_{6}$ of tangent lines}
\label{confC6tan}
\end{figure}

\section{Points $\mathbf{m}\in M^{6}$ with high $D\left(  \mathbf{m}\right)  $
value}

Here we describe the
`good' configurations $\mathbf{m}$ with
high values of the function $D\left(  \mathbf{m}\right)  .$ We obtain them by
deforming the configuration $C_{6}$ which in our notation can
be written as
\[
\begin{array}
[c]{ll}
C_{6} & \equiv C_{6}\left(  0,0,0\right)  =\left\{  \left[  \left(
0,\frac{\pi}{6}\right)  ,\uparrow\right]  ,\left[  \left(  0,\frac{\pi}
{2}\right)  ,\uparrow\right]  ,\left[  \left(  0,\frac{5\pi}{6}\right)
,\uparrow\right]  ,\right. \\[.8em]
& \hspace{3cm} \left.  \left[  \left(  0,\frac{7\pi}{6}\right)  ,\uparrow
\right]  ,\left[  \left(  0,\frac{3\pi}{2}\right)  ,\uparrow\right]  ,\left[
\left(  0,\frac{11\pi}{6}\right)  ,\uparrow\right]  \right\}  .
\end{array}
\]

Namely, we will explore the 6-tuples $C_{6}\left(  \varphi,\delta
,\varkappa\right)  $, of the form
\begin{equation}
\label{defconf}
\begin{array}
[c]{ll}
& C_{6}\left(  \varphi,\delta,\varkappa\right)  = \left\{  A=\left[  \left(
\varphi,\frac{\pi}{6}-\varkappa\right)  ,\uparrow_{\delta}\right]  ,D=\left[
\left(  -\varphi,\frac{\pi}{2} +\varkappa\right)  ,\uparrow_{\delta}\right]
,\right. \\[.8em]
& \hspace{2.74cm} B=\left[  \left(  \varphi,\frac{5\pi}{6}-\varkappa\right)
,\uparrow_{\delta}\right]  ,E=\left[  \left(  -\varphi,\frac{7\pi}
{6}+\varkappa\right)  ,\uparrow_{\delta}\right]  ,\\[.8em]
& \hspace{2.74cm} \left.  C=\left[  \left(  \varphi,\frac{3\pi}{2}
-\varkappa\right)  ,\uparrow_{\delta}\right]  ,F=\left[  \left(
-\varphi,\frac{11\pi} {6}+\varkappa\right)  ,\uparrow_{\delta}\right]
\right\}  .
\end{array}
\end{equation}
In words, the three points $\left[  \left(  0,\frac{\pi}{6}\right)
,\uparrow\right]  ,\left[  \left(  0,\frac{5\pi}{6}\right)  ,\uparrow\right]
$ and $\left[  \left(  0,\frac{3\pi}{2}\right)  ,\uparrow\right]  $ go upward
by $\varphi,$ then `horizontally' by $-\varkappa,$ and then the three vectors
$\uparrow$ are rotated by $\delta,$ while the three remaining points go
downward by $\varphi$, then `horizontally' by $\varkappa,$ and, finally, the
three vectors $\uparrow$ are rotated by $\delta$.

\vskip .2cm
For all $\varphi,\delta,\varkappa$ these configurations possess $\mathbb{D}
_{3}\equiv\mathbb{Z}_{3}\times\mathbb{Z}_{2}$ symmetry. The group
$\mathbb{D}_{3}$ is generated by the rotations $R_{\mathbf{z}}^{120^{\circ}}$
and $R_{\mathbf{x}}^{180^{\circ}}.$ We denote by $\mathcal{C}^{3}\in M^{6}$
the 3-dimensional submanifold formed by 6-tuples $\left(  \ref{defconf}
\right)  $.

\vskip .2cm
We claim that there exists a curve $\gamma$ in the manifold $\mathcal{C}^{3}
$,
\begin{equation}
\gamma(\varphi)=C_{6}\bigl(\varphi,\delta\left(  \varphi\right)
,\varkappa\left(  \varphi\right)  \bigr)\ ,\ \varphi\in\left[  0;\frac{\pi}
{2}\right]  \ , \label{curvegamma}
\end{equation}
which starts at $C_{6}\left(  0,0,0\right)  $ for $\varphi=0$,
\begin{equation}
\gamma(0)=C_{6}\left(  0,0,0\right)  \ , \label{curvegamma2}
\end{equation}
such that the function $D\bigl(\gamma(\varphi)\bigr)$
is unimodal on $\gamma,$ with maximum value
$\sqrt{\frac{12}{11}}$,
which corresponds to the value
$r_{\mathfrak{m}}$, given in $\left(  \ref{30}\right)  $, of the radii of the
touching cylinders. This is summarized in our main result below. Its proof
constitutes a part of Section \ref{secsolving}.

\begin{theorem}
\label{Main} The configuration $C_{6}\left(  0,0,0\right)  $ can be
unlocked. Moreover,

\vskip .2cm
\textbf{i. }There is a continuous curve $\gamma$, see (\ref{curvegamma}) and
(\ref{curvegamma2}),
on which the function $D\bigl(  \gamma(\varphi)\bigr) $
increases for $\varphi\in\left[  0,\varphi_{\mathfrak{m}}\right]  $ and decreases for
$\varphi>\varphi_{\mathfrak{m}},$ with $\varphi_{\mathfrak{m}}=\arcsin
\sqrt{\frac{3}{11}}.$ The explicit description of $\gamma$ is given in
(\ref{traj1})-(\ref{traj3}).

\vskip .3cm
\textbf{ii.} At the point $\varphi_{\mathfrak{m}},\delta_{\mathfrak{m}}
=\delta\left(  \varphi_{\mathfrak{m}}\right)  ,\varkappa_{\mathfrak{m}
}=\varkappa\left(  \varphi_{\mathfrak{m}}\right)  $ we have
\[
D\Bigl(  C_{6}\left(  \varphi_{\mathfrak{m}},\delta_{\mathfrak{m}}
,\varkappa_{\mathfrak{m}}\right)  \Bigr)  =\sqrt{\frac{12}{11}}\ ,
\]
so the radii of the corresponding cylinders are equal to
\[
r_{\mathfrak{m}}=\frac{1}{8}\left(  3+\sqrt{33}\right)  .
\]

\end{theorem}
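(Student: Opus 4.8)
The plan is to turn the three-parameter problem on $\mathcal{C}^3$ into a one-variable calculus problem in three stages: use the $\mathbb{D}_3$-symmetry to reduce the fifteen pairwise distances to a handful, cut $\mathcal{C}^3$ by the locus on which the smallest distances coincide to produce the curve $\gamma$, and then solve the resulting relations explicitly. To begin, for each of the six lines $A,\dots,F$ in $(\ref{defconf})$ I would write out the base point $x\in\mathbb{S}^2$ and the unit direction $\tau$ as explicit functions of $(\varphi,\delta,\varkappa)$, obtained by composing the latitude shift, the longitude shift, and the twist $R_{x}^{\delta}$ applied to the reference configuration $C_6(0,0,0)$. Substituting these into $d_{uv}^2=\det^2[\tau',\tau'',x''-x']/(1-(\tau',\tau'')^2)$ produces trigonometric expressions for every pair.

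Next I would sort the fifteen pairs into $\mathbb{D}_3$-orbits. With $\mathbb{Z}_3$ cycling $A\to B\to C$ and $D\to E\to F$ and the involution $R_{\mathbf{x}}^{180^\circ}$ acting as $A\leftrightarrow F,\ B\leftrightarrow E,\ C\leftrightarrow D$, the six same-group pairs fuse into one orbit and the nine cross pairs split into three orbits, leaving only four distinct distance functions. At $C_6(0,0,0)$ these equal the chords between equatorial points $60^\circ,60^\circ,120^\circ$ and $180^\circ$ apart, i.e.\ $1,1,\sqrt3,2$; the antipodal-type orbit stays near $2$ throughout and is irrelevant, so $D=\min(g_1,g_2,g_3)$ is governed by the remaining three functions, with $D(C_6)=1$ confirmed.

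I would then define $\gamma$ as the locus $\{g_1=g_2=g_3\}\subset\mathcal{C}^3$. These are two equations on a three-manifold, so in general position the solution set is a curve; checking that the relevant Jacobian is nondegenerate lets me take $\varphi$ as parameter and solve $g_1=g_2$ and $g_2=g_3$ for $\delta(\varphi)$ and $\varkappa(\varphi)$ with $\gamma(0)=C_6(0,0,0)$, yielding the explicit formulas $(\ref{traj1})$--$(\ref{traj3})$. The reason the maximizer of $D$ sits on $\gamma$ is the minimax balance already described in the introduction: at an interior maximum of $\min_i g_i$ the active distances must agree, which on a three-manifold is exactly the codimension-two condition defining $\gamma$ (the sharper statement that this is a genuine local maximum I would leave to \cite{OS}). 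That all fifteen distances become strictly positive for $\varphi>0$ then gives the unlocking.

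The crux is the final stage. Here I would carry out the substitution flagged in the introduction that replaces the trigonometric identities by polynomial ones, solve the coupled system for $\delta(\varphi),\varkappa(\varphi)$ in closed form, and feed them back to obtain the single profile $D(\gamma(\varphi))$, the common value of $g_1,g_2,g_3$. Differentiating, I expect the critical-point condition to collapse to $\sin^2\varphi=3/11$, giving $\varphi_{\mathfrak{m}}=\arcsin\sqrt{3/11}$ and $D=\sqrt{12/11}$, whence $(\ref{11})$ yields $r_{\mathfrak{m}}=\tfrac18(3+\sqrt{33})$; a sign check on the derivative on either side establishes unimodality. The main obstacle is precisely this step: finding the algebraic change of variables that makes the three coupled trigonometric equations solvable in radicals, and then verifying that the maximum is attained exactly at the stated point rather than merely bounded by $\sqrt{12/11}$.
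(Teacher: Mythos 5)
Your outline reproduces the paper's strategy faithfully: the $\mathbb{D}_3$-orbit count (one orbit of six same-group pairs plus three orbits of three cross pairs, with initial values $\sqrt{3},1,1,2$), the definition of $\gamma$ as the locus where the three relevant distances coincide, and the reduction to a one-variable maximization are exactly what the paper does. But what you call ``the main obstacle'' --- finding the algebraic change of variables and solving the coupled system in radicals --- is not a detail to be deferred; it \emph{is} the proof, and your proposal stops precisely where the paper's argument begins. The paper's route is: set $S=\sin\varphi$, $T=\tan\delta$, $U=\tan(\varkappa-\frac{\pi}{6})$, $\bar{U}=-\tan(\varkappa+\frac{\pi}{6})$, so that the three squared distances become the rational functions (\ref{31})--(\ref{33}); then $d_{AD}^{2}=d_{BD}^{2}$ factors and forces $U-\bar{U}+2ST=0$, which together with (\ref{d6c13}) and with $d_{AB}^{2}=d_{AD}^{2}$ allows elimination of $U$ via (\ref{d6c15}) and reduces the curve to a component of the plane curve $\Psi(S,T)=0$; finally the key substitution $x=\cos^{2}(\varphi)\cos^{2}(\delta)=(1-S^{2})/(1+T^{2})$ of (\ref{d6c16}) makes the constraint solvable, giving $t=T^2$ explicitly in (\ref{d6c17}), and collapses the common squared distance to $F=12x/(1+7x+4x^{2})$ as in (\ref{d6c18}), maximized at $x=1/2$ with value $12/11$; back-substitution then yields $\sin^{2}\varphi_{\mathfrak{m}}=3/11$, $\tan^{2}\delta_{\mathfrak{m}}=5/11$, $\varkappa_{\mathfrak{m}}$, and the radius $(\ref{37})$. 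None of these steps is recoverable from your outline by routine work; the existence of a variable in which the minimax problem becomes ``integrable'' is the surprise the authors themselves emphasize, and without it the specific values $\varphi_{\mathfrak{m}}$, $\sqrt{12/11}$, $r_{\mathfrak{m}}$ are asserted, not established.

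Two steps in your plan would moreover fail as stated. First, you propose to obtain $\delta(\varphi),\varkappa(\varphi)$ from $g_1=g_2$, $g_2=g_3$ by a Jacobian-nondegeneracy/implicit-function argument with $\varphi$ as parameter; but at the anchor point $C_{6}(0,0,0)$ the equal-distance locus is singular --- the paper shows its leading part is $4S^{2}-8T^{2}$, so two branches (related by a reflection) cross exactly there and the Jacobian is degenerate where you need it most; the paper instead chooses a branch of the algebraic curve $\Psi=0$ and parameterizes it globally by $x$, not by $\varphi$. Second, your claim that the antipodal-type orbit ``stays near $2$ throughout and is irrelevant'' is doing real, unproved work: to know that $D$ on $\gamma$ equals the common value of the three balanced distances one must compare it with $d_{AE}$, which the paper settles by the explicit check $d_{AE}^{2}=540/143>12/11$ at the record configuration. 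So your proposal is a correct road map of the same route the paper takes, but with the decisive computations and these verifications missing, it does not amount to a proof.
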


We stress again that the existence of analytic expression for the curve
$\gamma$ comes beyond expectations, and seems quite surprising.

\vskip .2cm
The record configuration is shown on Figures \ref{record1}, \ref{record2}
and \ref{record3}.

\newpage

\begin{figure}[h!]
\centering
\includegraphics[scale=0.22]{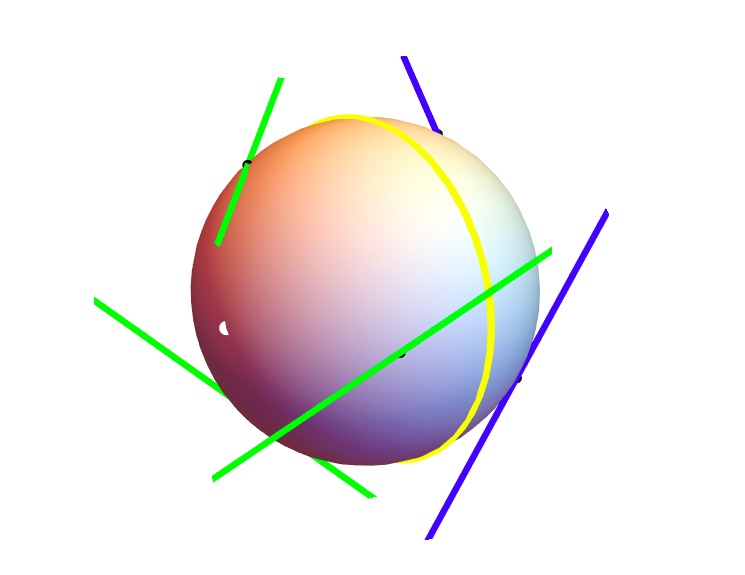}
\caption{Record configuration, side view, the equator is yellow, the north
pole is white\label{record1}}
\includegraphics[scale=0.2]{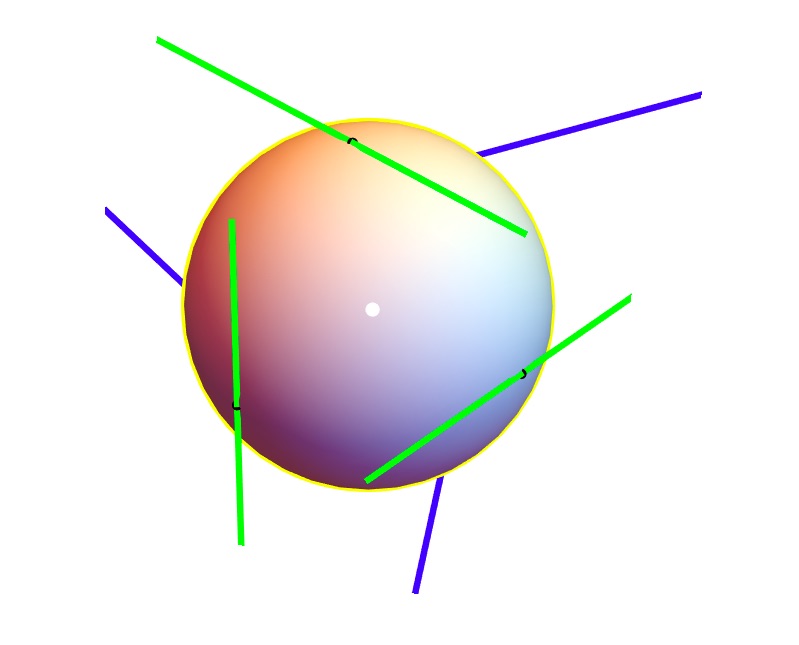}
\caption{ Record configuration again, three upper tangency points shown\label{record2}}
\includegraphics[scale=0.2]{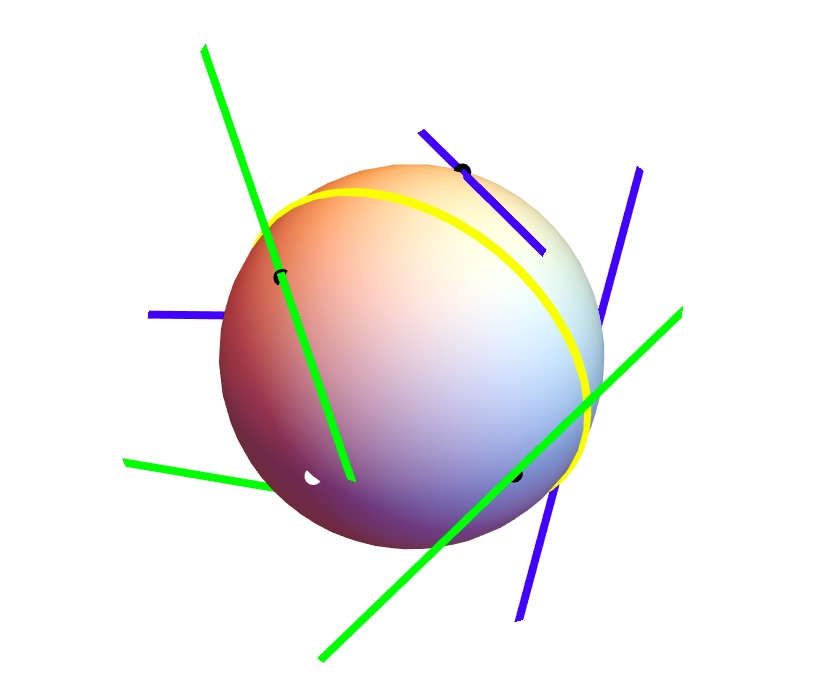}
\caption{Record configuration once more, two upper and one lower tangency points shown\label{record3}}
\end{figure}

\newpage

\section{Formulas for $\mathbb{D}_{3}$-symmetric configurations \label{F}}

Now we present explicit formulas for exploring the manifold
$\mathcal{C}^{3}.$ Because of the $\mathbb{D}_{3}$-symmetry, $d_{AB}
=d_{BC}=d_{CA}=d_{DE}=d_{EF}=d_{FD},$ so we need only one of these, which is
given by
\[
d_{AB}^{2}=\frac{48\sin^{2}(\delta)\cos^{2}(\delta)\cos^{4}(\varphi)}{\left(
6\cos^{2}(\delta)\cos(2\varphi)+3\cos(2\delta)+7\right)  \left(  \cos
^{2}(\delta)\sin^{2}(\varphi)+\sin^{2}(\delta)\right)  }\ ,
\]
which, naturally, does not depend on $\varkappa.$ Also, $d_{AD}=d_{BE}
=d_{CF},$ with
\[
d_{AD}^{2}=\frac{\mu_{AD}^{2}}{4(1-\nu_{AD}^{2})}\ ,\ \text{where}
\]
\[
\begin{array}
[c]{rcl}
\mu_{AD} & = & \sin(2\delta)\Bigl(2\cos^{2}(\varphi)-(\cos(2\varphi
)-3)\sin\left(  2\text{$\varkappa$}-\frac{\pi}{6}\right)  \Bigr)\\[1em]
& + & 4\cos(2\delta)\sin(\varphi)\cos\left(  2\text{$\varkappa$}-\frac{\pi}
{6}\right)  \ ,
\end{array}
\]
\[
\begin{array}
[c]{rcl}
\nu_{AD} & = & \sin\left(  2\text{$\varkappa$}-\frac{\pi}{6}\right)
\Bigl(\sin^{2}(\delta)-\cos^{2}(\delta)\sin^{2}(\varphi)\Bigr)\\[1em]
& + & \sin(2\delta)\sin(\varphi)\cos\left(  2\text{$\varkappa$}-\frac{\pi}
{6}\right)  -\cos^{2}(\delta)\cos^{2}(\varphi)\ .
\end{array}
\]
The third triplet of functions is $d_{BD}=d_{CE}=d_{AF},$ with
\[
d_{BD}^{2}=\frac{\mu_{BD}^{2}}{4(1-\nu_{BD}^{2})}\ ,\ \text{where}
\]
\[
\begin{array}
[c]{rcl}
\mu_{BD} & = & \sin(2\delta)\Bigl(2\cos^{2}(\varphi)-(\cos(2\varphi
)-3)\sin\left(  2\text{$\varkappa$}-\frac{5\pi}{6}\right)  \Bigr)\\[1em]
& + & 4\cos(2\delta)\sin(\varphi)\cos\left(  2\text{$\varkappa$}-\frac{5\pi
}{6}\right)  \ ,
\end{array}
\]
\[
\begin{array}
[c]{rcl}
\nu_{BD} & = & \sin\left(  2\text{$\varkappa$}-\frac{5\pi}{6}\right)
\Bigl(\sin^{2}(\delta)-\cos^{2}(\delta)\sin^{2}(\varphi)\Bigl)\\[1em]
& + & \sin(2\delta)\sin(\varphi)\cos\left(  2\text{$\varkappa$}-\frac{5\pi}
{6}\right)  -\cos^{2}(\delta)\cos^{2}(\varphi).
\end{array}
\]
The last triplet is $d_{AE}=d_{BF}=d_{CD},$ with
\[
d_{AE}^{2}=\frac{\mu_{AE}^{2}}{\nu_{AE}}\ ,\ \text{where}
\]
\[
\mu_{AE}=2\Bigl(\cos\left(  \delta\right)  \cos\left(  \varkappa\right)
-\sin\left(  \delta\right)  \sin\left(  \varphi\right)  \sin\left(
\varkappa\right)  \Bigr),
\]
\[
\nu_{AE}=\cos^{2}(\delta)  \cos^{2}\left(  \varphi\right)
+\left(  \sin (\delta ) \right) \sin\left(  \varkappa\right)  -\cos\left(
\delta\right)  \sin\left(  \varphi\right)  \cos\left(  \varkappa\right)
\Bigr)^{2}.
\]
The derivation of the above formulas is straightforward, though tedious.
It is difficult to explore these formulas directly. However, there is a
suitable choice of variables,
such that instead of ratios of trigonometric polynomials involving various
$\sin$-s and $\cos$-s of various angles,
the square of each distance becomes a rational function.

\begin{proposition}
Let
\begin{equation}
S=\sin(\varphi)\ ,\ T=\tan(\delta)\ , \label{d6c3}
\end{equation}
and
\[
U=\tan(\varkappa-\frac{\pi}{6})\ ,\ \bar{U}=-\tan(\varkappa+\frac{\pi}{6})\ .
\]
Then
\begin{equation}
d_{AB}^{2}=\frac{12T^{2}\left(  1-S^{2}\right)  ^{2}}{(4-3S^{2}+T^{2}
)(S^{2}+T^{2})}\ , \label{31}
\end{equation}
\begin{equation}
d_{AD}^{2}=\frac{4\left(  TS+U\right)  ^{2}}{1+U^{2}+T^{2}-S^{2}+2STU}\ ,
\label{32}
\end{equation}
\begin{equation}
d_{BD}^{2}=\frac{4\left(  -TS+\bar{U}\right)  ^{2}}{1+\bar{U}^{2}+T^{2}
-S^{2}-2ST\bar{U}}\ . \label{33}
\end{equation}

\end{proposition}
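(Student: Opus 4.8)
The plan is to prove all three identities by substituting the rationalizing variables $\left(\ref{d6c3}\right)$ into the trigonometric distance formulas of Section \ref{F} and simplifying. The mechanism is uniform: every ingredient of those formulas is a rational expression in $\sin 2\delta,\cos 2\delta,\cos^2\varphi,\cos 2\varphi$ and in the sine and cosine of the $\varkappa$-dependent angle, and each of these turns into a rational function of $S,T,U,\bar U$ through the half-angle identities
\[
\cos^2\delta=\frac{1}{1+T^2}\ ,\quad \sin^2\delta=\frac{T^2}{1+T^2}\ ,\quad \sin 2\delta=\frac{2T}{1+T^2}\ ,\quad \cos 2\delta=\frac{1-T^2}{1+T^2}\ ,
\]
together with $\cos^2\varphi=1-S^2$ and $\cos 2\varphi=1-2S^2$. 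Thus rationality of each $d^2$ in $S,T,U,\bar U$ is automatic; the real content is that after cancellation the expressions collapse to the compact shapes $\left(\ref{31}\right)$--$\left(\ref{33}\right)$.

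First I would treat $d_{AB}^2$, which is $\varkappa$-independent and the cleanest. The numerator becomes $48\,T^2(1-S^2)^2/(1+T^2)^2$; the first denominator factor becomes $4(4-3S^2+T^2)/(1+T^2)$ once one collects $6(1-2S^2)+3(1-T^2)+7(1+T^2)=16-12S^2+4T^2$, and the second denominator factor becomes $(S^2+T^2)/(1+T^2)$. The factors of $(1+T^2)$ cancel between numerator and denominator, the overall constant reduces as $48/4=12$, and one lands exactly on $\left(\ref{31}\right)$.

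For $d_{AD}^2$ and $d_{BD}^2$ the additional and decisive step is to convert the sine and cosine of the doubled longitude entering $\mu_{AD},\nu_{AD}$ (respectively $\mu_{BD},\nu_{BD}$) into rational functions of $U$ (respectively $\bar U$) by the tangent half-angle substitution attached to $U=\tan(\varkappa-\tfrac{\pi}{6})$ and $\bar U=-\tan(\varkappa+\tfrac{\pi}{6})$. After this substitution $\mu$ and $\nu$ become rational in $S,T,U$ (resp. $\bar U$); forming the ratio $\mu^2/\bigl(4(1-\nu^2)\bigr)$ and clearing the common $(1+T^2)$- and $(1+U^2)$-type denominators should, after cancellation, collapse it to numerator $4(TS+U)^2$ and denominator $1+U^2+T^2-S^2+2STU$, i.e.\ to $\left(\ref{32}\right)$, and likewise to $4(-TS+\bar U)^2$ over $1+\bar U^2+T^2-S^2-2ST\bar U$ for $\left(\ref{33}\right)$. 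The sign asymmetry between the two is precisely the sign asymmetry in the definitions of $U$ and $\bar U$ and in the $\tfrac{\pi}{6}$ versus $\tfrac{5\pi}{6}$ phase.

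The step I expect to be the main obstacle is this $\varkappa$-reduction together with the ensuing polynomial cancellation. The phase shifts introduce cross terms that must conspire to assemble into the single binomials above and to eliminate the apparently irrational ($\sqrt3$) contributions of the shifted angles; keeping this bookkeeping straight, and controlling the size of the intermediate polynomials, is where the work lies. A pragmatic route, which also serves as an independent check, is to bypass the intermediate formulas of Section \ref{F} altogether and recompute $d_{AD}^2,d_{BD}^2$ directly from $d_{uv}^2=\det^2[\tau',\tau'',x''-x']/(1-(\tau',\tau'')^2)$ using the explicit coordinates $\left(\ref{defconf}\right)$ with $S,T,U,\bar U$ inserted from the outset; since the final cancellations are large, I would confirm them with computer algebra, as the authors indicate they did.
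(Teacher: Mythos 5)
Your proposal is correct and follows essentially the same route as the paper, whose entire proof is the remark that one ``just needs to check various trigonometric identities'': you substitute $S=\sin\varphi$, $T=\tan\delta$, $U=\tan(\varkappa-\tfrac{\pi}{6})$, $\bar U=-\tan(\varkappa+\tfrac{\pi}{6})$ into the formulas of Section~\ref{F} and verify the cancellations, and your explicit reduction of $d_{AB}^{2}$ (with $6(1-2S^{2})+3(1-T^{2})+7(1+T^{2})=4(4-3S^{2}+T^{2})$) is exactly right. The remaining verifications for $d_{AD}^{2}$ and $d_{BD}^{2}$, including the disappearance of the $\sqrt{3}$ terms, are routine (if tedious) computations of the kind the authors themselves delegate to brute-force checking.
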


Since $(\varkappa+\frac{\pi}{6})=(\varkappa-\frac{\pi}{6})+\frac{\pi}{3}$, we
have $\left(  \text{via }\tan(\beta_{1}+\beta_{2})=\frac{\tan(\beta_{1}
)+\tan(\beta_{2})}{1-\tan(\beta_{1})\tan(\beta_{2})}\right)  $:
\begin{equation}
\bar{U}=-\frac{U+\sqrt{3}}{1-\sqrt{3}U}\ \ \text{or}\ \ -\sqrt{3}U\bar
{U}+U+\bar{U}+\sqrt{3}=0\ . \label{d6c13}
\end{equation}
The proof of the proposition is elementary: one just needs to check various
trigonometric identities. Yet to find the right choice of variables, allowing
further analysis,
was the longest part of the present work, involving lengthy and painful computations.

\section{Solving $d_{AB}^{2}=d_{BD}^{2}=d_{AD}^{2}.$}

\label{secsolving}

In this section we will write the functions $d_{AB}^{2},d_{BD}^{2},d_{AD}
^{2},$ given by relations $\left(  \ref{31}-\ref{33}\right)  ,$ on the curve
$d_{AB}^{2}=d_{BD}^{2}=d_{AD}^{2}$ as functions of one parameter, and then
will maximize them. Also we will use, of course, the relation $\left(
\ref{d6c13}\right)  .$

\vskip .2cm
The angle $\varphi$ is positive in the region of our interest, so the factors
in the denominator of $d_{AB}^{2}$ do not vanish. The denominator of
$d_{AD}^{2}$ (similarly for $d_{BD}^{2}$) can be written in the form
$(U+ST)^{2} +(1+T^{2})(1-S^{2})$ so it does not vanish either.

\vskip .2cm
The equality $d_{AD}^{2}=d_{BD}^{2}$ gives
\begin{equation}
(T^{2}+1)(S^{2}-1)(U+\bar{U})(U-\bar{U}+2ST)=0\ . \label{d6c14}
\end{equation}
The factor $(T^{2}+1)$ is non-zero, the factor $(S^{2}-1)$ is non-zero at any
point in our $(\varphi,\varkappa,\delta)$-space except the initial point
$\varphi=0$, the factor $(U+\bar{U})$ is non-zero (see (\ref{d6c13})) so we
conclude
\begin{equation}
U-\bar{U}+2ST=0\ . \label{d6c14b}
\end{equation}
Jointly, eqs. (\ref{d6c13}) and (\ref{d6c14b}) imply
\[
K_{1}\equiv-\sqrt{3}U^{2}+2U\left(  1-\sqrt{3}ST\right)  +2ST+\sqrt{3}=0.
\]
The equality $d_{AB}^{2}=d_{AD}^{2}$ leads to 
\[
(T^{2}+1)\left[  (-4S^{2}+3S^{4}-T^{2})(U+ST)^{2}-3T^{2}(S^{2}-1)^{3}\right]
=0.
\]
The factor $(T^{2}+1)$ does not vanish, so we obtain:
\[
K_{2}\equiv(-4S^{2}+3S^{4}-T^{2})(U+ST)^{2}-3T^{2}(S^{2}-1)^{3}=0.
\]
Therefore
\[
(-4S^{2}+3S^{4}-T^{2})K_{1}-\sqrt{3}K_{2}=0,
\]
which reads
\[
-\sqrt{3}T^{2}(S^{2}-1)^{3}+\left(  4S^{2}-3S^{4}+T^{2}\right)  \ \left(
\sqrt{3}+2ST+\sqrt{3}S^{2}T^{2}+2U\right)  =0\ .
\]
Since the factor $\left(  4S^{2}-3S^{4}+T^{2}\right)  $ does not vanish on our
trajectory, we have
\begin{equation}
2U=\frac{\sqrt{3}T^{2}(S^{2}-1)^{3}}{\left(  4S^{2}-3S^{4}+T^{2}\right)
}-\sqrt{3}-2ST-\sqrt{3}S^{2}T^{2}\ . \label{d6c15}
\end{equation}
Substituting the expression (\ref{d6c15}) for $U$ into either $K_{1}$ or
$K_{2}$ we find
\[
\frac{S^{2}(4-3S^{2}+T^{2})}{(4S^{2}-3S^{4}+T^{2})^{2}}\Psi=0\ ,
\]
where
\[
\Psi\!=\!4S^{2}-8T^{2}-3S^{4}+29S^{2}T^{2}-4T^{4}-22S^{4}T^{2}+14S^{2}T^{4}
+4S^{6}T^{2}-7S^{4}T^{4}+S^{2}T^{6}\ .
\]
Again, the factor $S^{2}(4-3S^{2}+T^{2})$ does not vanish, so our trajectory
is defined by (\ref{d6c15}) and a component of the curve
\[
\Psi=0\ .
\]
The leading term of $\Psi$ at $0$ is $4S^{2}-8T^{2}$ so there are two
components of the curve passing through $0$. These two components are related
by the reflection of the initial sphere, so we can, without losing generality,
take the component for which $T>0$ for $S>0$ for small $S$ and $T$.

\vskip .2cm
We are now looking at the maximum value of the square of the distance
$d_{AB}^{2}=d_{AD}^{2}=d_{BD}^{2}$ on our trajectory. The simplest way to do
this is to find the maximum value of $d_{AB}^{2}$ constrained to the curve
$\Psi=0$ since both expressions, $d_{AB}^{2}$ and $\Psi$, do not contain the
variable $U$. Moreover, only even powers of $S$ and $T$ appear in $d_{AB}^{2}$
and $\Psi$ so we set $s=S^{2}$ and $t=T^{2}$ and look for the maximum value of
the function
\[
F=\frac{12t\left(  1-s\right)  ^{2}}{(4-3s+t)(s+t)}
\]
with the constraint
\[
\psi=0\ ,\ \ \text{where}\ \ \psi=4s-8t-3s^{2}+29st-4t^{2}-22s^{2}
t+14st^{2}+4s^{3}t-7s^{2}t^{2}+st^{3}\ .
\]
Let
\begin{equation}
x=\frac{1-s}{t+1}\left(  =\cos^{2}(\varphi)\cos^{2}(\delta)\right)  .
\label{d6c16}
\end{equation}
In the variables $t$ and $x$, the expression $\psi$ has the following form:
\[
\psi=-(1+t)^{3}\left(  -1-2x+tx+3x^{2}+7tx^{2}+4tx^{3}\right)  \ .
\]
The factor $(1+t)$ is non-zero, hence the relation $\psi=0$ implies
\begin{equation}
t=\frac{1+2x-3x^{2}}{x\left(  1+7x+4x^{2}\right)  }=\frac{(1+3x)(1-x)}
{x\left(  1+7x+4x^{2}\right)  } \label{d6c17}
\end{equation}
along our component of the constraint curve. Note that the zeros of the
polynomial $1+7x+4x^{2}$ are negative while the values of the variable $x$
are, by construction, positive, see (\ref{d6c16}).

The function $F$ in the variables $t$ and $x$ reads
\[
F=\frac{12tx^{2}}{(1-x)(1+3x)}\ .
\]
Substituting the expression (\ref{d6c16}) for $t$ we find that, along our
curve,
\begin{equation}
F=\frac{12x}{1+7x+4x^{2}}\ . \label{d6c18}
\end{equation}
By construction, the variable $x$ decreases from 1 to 0 on our trajectory.
It is straightforward to find that the fraction (\ref{d6c18}) on the interval
$\left(  0,1\right)  $ attains its maximum at the point
\begin{equation}
x_{\mathfrak{m}}=\frac{1}{2} \label{d6c19}
\end{equation}
with the value
\begin{equation}
F(x_{\mathfrak{m}})=\frac{12}{11}\ . \label{d6c20}
\end{equation}
From (\ref{d6c17}) we now obtain the value of $\delta$ corresponding to this
point:
\begin{equation}
t_{\mathfrak{m}}=\tan^{2}(\delta_{\mathfrak{m}})=\frac{5}{11}\ , \label{d6c21}
\end{equation}
and then, by (\ref{d6c16}), the value of $\varphi$:
\begin{equation}
s_{\mathfrak{m}}=\sin^{2}(\varphi_{\mathfrak{m}})=\frac{3}{11}\ .
\label{d6c22}
\end{equation}
Finally, eq. (\ref{d6c15}) gives the value of $\varkappa$:
\begin{equation}
U_{\mathfrak{m}}=\tan(\varkappa_{\mathfrak{m}}-\frac{\pi}{6})=-\frac{1}
{11}\sqrt{3}\left(  4+\sqrt{5}\right)  \ , \label{d6c23}
\end{equation}
which means that
\[
\tan(\varkappa_{\mathfrak{m}})=-\frac{1}{\sqrt{15}}\ .
\]

At the point $(\varphi_{\mathfrak{m}},\varkappa_{\mathfrak{m}},\delta
_{\mathfrak{m}})$ the square of the distance in the last triplet is
\[
d_{AE}^{2}=\frac{540}{143}>\frac{12}{11}\ .
\]

The radius of the touching cylinders is given by $\left(  \ref{11}\right)  :$
\begin{equation}
r_{\mathfrak{m}}=\frac{\sqrt{\frac{12}{11}}}{2-\sqrt{\frac{12}{11}}}=\frac
{1}{8}\left(  3+\sqrt{33}\right)  \approx1.093070331\ . \label{37}
\end{equation}

Our trajectory $\gamma$ is parameterized by the variable $x$ as follows:
\begin{equation}
S=2\sqrt{\frac{(1-x)x(1+x)}{1+7x+4x^{2}}}\ , \label{traj1}
\end{equation}
\begin{equation}
T=\sqrt{\frac{(1-x)(1+3x)}{x+7x^{2}+4x^{3}}}\ , \label{traj2}
\end{equation}
\[
U=\frac{1}{2}\left(  -\sqrt{3}-\frac{4(1-x)\sqrt{(1+x)(1+3x)}}{1+7x+4x^{2}
}+\frac{\sqrt{3}(-1+5x)}{1+7x+4x^{2}}\right)  \ .
\]
The last equation can be rewritten in the form
\begin{equation}
\tan(\varkappa)=\frac{x-1}{\sqrt{(1+x)(1+3x)}}\ . \label{traj3}
\end{equation}

It is interesting to note that the point where the function $F$ gets back its
initial value 1 is also (as $x_{\mathfrak{m}}$) rational: $x=1/4$.

\section{Generalizations}

In this section we briefly consider the analogous deformation of $2n$ congruent 
parallel cylinders touching the unit ball, for values of $n$ different from 3.
We start by presenting the formulas needed and then prove that for $n>2$ the
configuration is unlockable. The case $n=2$ is special and we consider it in detail.

\subsection{Various distances}

Let $\alpha$ be the `angle' between two neighboring vertical cylinders
($\alpha$ is $\frac{\pi} {3}$ for $n=3$). Our initial configuration,
generalizing the configuration of three lines $A$, $B$ and $D$,
is
\[
C_{3} \equiv C_{3}\left(  0,0,0\right)  =\textstyle{ \left\{  \left[  \left(
0,\frac{\alpha}{2}\right)  ,\uparrow\right]  ,\left[  \left(  0,\frac{3\alpha
}{2}\right)  ,\uparrow\right]  ,\left[  \left(  0,\frac{5\alpha}{2}\right)
,\uparrow\right]  \right\}  }\ .
\]
We will study its deformations
\[
C_{3}\left(  \varphi,\delta,\varkappa\right)  =\{ A,B,D\}\ \ ,\ \text{where}
\]
\[
\textstyle{ A=\left[  \left(  \varphi,\frac{\alpha}{2}-\varkappa\right)
,\uparrow_{\delta}\right]  ,B=\left[  \left(  \varphi,\frac{5\alpha}
{2}-\varkappa\right)  ,\uparrow_{\delta}\right]  ,
D=\left[  \left(  -\varphi,\frac{3\alpha}{2}+\varkappa\right)  ,\uparrow
_{\delta}\right]  .
}
\]
For the future use we introduce the notation
\[
\gamma:=\varkappa-\frac{\alpha}{2}\ ,\ \bar{\gamma}:=\varkappa+\frac{\alpha
}{2}\ .
\]
In the coordinates $\left(  \ref{d6c3}\right)  $ we find, after lengthy
computations, that
\begin{equation}
d_{AB}^{2}=\frac{4\sin(\alpha)^{2}(1-S^{2})^{2}T^{2}}{(S^{2}+T^{2}
)(1-\sin(\alpha)^{2}S^{2}+\cos(\alpha)^{2}T^{2})}\ . \label{d6c5}
\end{equation}
Next, putting
\begin{equation}
\ U=\tan(\gamma),\ \ \bar{U}=\tan(\bar{\gamma})\ , \label{d6c3b}
\end{equation}
we get
\begin{equation}
d_{AD}^{2}=\frac{4(ST+U)^{2}}{1-S^{2}+T^{2}+U^{2}+2STU}\ , \label{d6c2}
\end{equation}
while
\[
d_{BD}^{2}=d_{AD}^{2}|_{\gamma\rightarrow\bar{\gamma},\delta\rightarrow
-\delta}\ .
\]
Again, the initial trigonometric formulas for these distances involve several
different trigonometric functions for each angle $\varphi,\varkappa,\delta.$
The advantage of the formulas above is that every variable $\varphi
,\varkappa,\delta$ enters each distance only via a single trigonometric
function, and so these expressions become algebraic, which permits us to write
down the final formulas.

\subsection{When can the distances $d_{AB},$ $d_{AD},$ $d_{BD}$ grow?}

Here we look for the range of $\alpha$ in which all the distances $d_{AB},$
$d_{AD},$ $d_{BD}$ increase above the value $4\sin(\alpha/2)^{2}$ (the initial
distance between the lines $A$ and $D$) as we move away from the point
$C_{3}\left(  0,0,0\right)  $ in $C_{3}\left(  \varphi,\delta,\varkappa
\right)  .$ As a result, we will prove the following statement.

\begin{proposition}
For any $n=2k\geq6$ the configuration of $n$ congruent parallel non-intersecting
cylinders, touching the unit ball, can be unlocked.
\end{proposition}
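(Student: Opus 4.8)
The plan is to show that for every even $n=2k\ge 6$ (equivalently $\alpha=\pi/n\le\pi/6$), one can move away from the symmetric starting point $C_3(0,0,0)$ in a direction along which all three relevant distances $d_{AB}$, $d_{AD}$, $d_{BD}$ strictly exceed their common initial value $4\sin^2(\alpha/2)$. By the $\mathbb{D}_3$-type symmetry of the full $2n$-cylinder configuration (the reflection $\delta\to-\delta$, $\gamma\to\bar\gamma$ exchanges $d_{AD}$ and $d_{BD}$, and all triplets of distances are copies of these three by the $\mathbb{Z}_k$ rotation), it suffices to control precisely the three functions $d_{AB}^2$, $d_{AD}^2$, $d_{BD}^2$ given in $(\ref{d6c5})$ and $(\ref{d6c2})$. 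Once all three become strictly positive above the contact value, no two cylinders touch, which is exactly the definition of unlocking.

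The concrete approach is an infinitesimal (first-order) analysis at the point $(\varphi,\delta,\varkappa)=(0,0,0)$, i.e. $S=T=0$ and $U=\bar U=\tan(\pm\alpha/2)$. First I would compute, from the explicit rational expressions, the gradients of $d_{AB}^2$, $d_{AD}^2$, $d_{BD}^2$ in the coordinates $(S,T,U)$ (or directly in $\varphi,\delta,\varkappa$) at this point. The guiding idea is that $C_3(0,0,0)$ is a contact point where the three distances coincide, and one wants a tangent vector $(\dot\varphi,\dot\delta,\dot\varkappa)$ along which the directional derivative of each of the three functions is positive. Since $d_{AB}^2$ vanishes to second order in $T$ near $T=0$ (note the overall factor $T^2$ in $(\ref{d6c5})$), the first-order condition on $d_{AB}$ is automatically non-restrictive: it is the two functions $d_{AD}^2$ and $d_{BD}^2$, which are generically first-order in the deformation parameters, that impose the real constraints. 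I would therefore linearize $d_{AD}^2$ and $d_{BD}^2$ about the base point and ask for the set of directions in which both increase, then verify this set is nonempty precisely when $\alpha\le\pi/6$.

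The computation reduces to the sign of a small number of trigonometric expressions in $\alpha$. Writing $U_0=\tan(\alpha/2)$ and $\bar U_0=\tan(\alpha/2)$ at the start (adjusted by the $\pm$ convention), the linear parts of $d_{AD}^2$ and $d_{BD}^2$ in the perturbation will each be a linear form in $(\dot\varphi,\dot\delta,\dot\varkappa)$ whose coefficients depend on $\alpha$ through $\sin\alpha$, $\cos\alpha$, $\sin(\alpha/2)$. The feasibility of simultaneously increasing both distances amounts to these two linear forms not being anti-aligned, i.e. to a determinant/compatibility condition; I expect this to collapse to an inequality of the shape $\cos\alpha \ge \tfrac12$ or an equivalent threshold, giving exactly $\alpha\le\pi/3$ for the pairwise condition and the sharper $\alpha\le\pi/6$ once the interaction with $d_{AB}$ (which requires moving $T$ off zero and hence introduces a competing second-order cost) is taken into account. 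For the borderline and sub-borderline cases I would exhibit an explicit admissible direction and check the three inequalities directly, rather than rely on an abstract transversality argument.

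The main obstacle I anticipate is not the algebra of the linearization itself but handling the degeneracy of $d_{AB}$: because it starts at second order in $T$, a purely first-order argument cannot see it, so one must carry the expansion of $d_{AB}^2$ to second order and confront it against the first-order gains in $d_{AD}^2$, $d_{BD}^2$ to confirm that a genuine net opening persists for small but finite deformation. Concretely, I would fix a first-order-feasible direction for $d_{AD}$ and $d_{BD}$, then choose the rate at which $T$ grows (quadratically slower than $\varphi,\varkappa$) so that $d_{AB}$ rises while the second-order corrections to $d_{AD}$, $d_{BD}$ do not overwhelm their positive first-order terms. Establishing that such a scaling exists for all $\alpha\le\pi/6$, and pinning down that the condition fails for $\alpha>\pi/6$ (the cases $n=2,4$), is where the care is needed; the rest follows by symmetry and continuity, lifting the infinitesimal unlocking of the triple $\{A,B,D\}$ to a genuine unlocking of the full $2n$-cylinder configuration.
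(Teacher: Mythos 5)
Your overall strategy---perturbing away from $C_{3}(0,0,0)$ and expanding the relevant distances---is the same kind of infinitesimal analysis the paper performs, but two of your central technical claims are false, and they are exactly the points on which the proof turns. First, $d_{AB}^{2}$ does not ``vanish to second order in $T$.'' At the base point $S=T=0$ the factor $T^{2}/(S^{2}+T^{2})$ in (\ref{d6c5}) is a direction-dependent $0/0$: the distance between lines is only upper semicontinuous at pairs of parallel lines, and along a curve with $\varphi\sim\varphi_{1}t$, $\delta\sim\delta_{1}t$ one gets the \emph{zeroth-order} limit $\left[d_{AB}^{2}\right]_{0}=4\delta_{1}^{2}\sin^{2}(\alpha)/(\delta_{1}^{2}+\varphi_{1}^{2})$, which can be anything between $0$ and the parallel-line value $4\sin^{2}(\alpha)$ depending on the direction. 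So $d_{AB}$ is not ``non-restrictive''; it imposes the zeroth-order constraint $\delta_{1}^{2}\sin^{2}(\alpha)\geq(\delta_{1}^{2}+\varphi_{1}^{2})\sin^{2}(\alpha/2)$, which bounds $\delta_{1}$ from \emph{below} in terms of $\varphi_{1}$. Your proposed remedy---let $T$ grow quadratically slower than $\varphi,\varkappa$---does the opposite of what you intend: it forces $\left[d_{AB}^{2}\right]_{0}=0$, i.e.\ the cylinders $A$ and $B$ collide immediately. Second, there is no direction in which $d_{AD}^{2}$ and $d_{BD}^{2}$ both increase at first order, for any $\alpha$: their linearizations are exact negatives of each other, $\left[d_{AD}^{2}\right]_{1}=-4\varkappa_{1}\sin(\alpha)=-\left[d_{BD}^{2}\right]_{1}$, so weak growth of both forces $\varkappa_{1}=0$ and kills all first-order terms. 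The decisive constraint appears only at second order, where the free coefficient $\varkappa_{2}$ can be chosen so that both $\left[d_{AD}^{2}\right]_{2}$ and $\left[d_{BD}^{2}\right]_{2}$ are positive iff $\delta_{1}^{2}\leq\varphi_{1}^{2}$. Your plan rests on first-order gains that do not exist and on a scaling of $T$ that makes $d_{AB}$ collapse, so as written it cannot be completed.

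The actual proof hinges on the compatibility of the two inequalities above: $\delta_{1}^{2}\sin^{2}(\alpha)\geq(\delta_{1}^{2}+\varphi_{1}^{2})\sin^{2}(\alpha/2)$ (from $d_{AB}$, pushing $\delta_{1}$ up) against $\delta_{1}^{2}\leq\varphi_{1}^{2}$ (from $d_{AD},d_{BD}$, pushing $\delta_{1}$ down), which are simultaneously satisfiable iff $\cos^{2}(\alpha/2)\geq1/2$, i.e.\ iff the neighbor angle satisfies $\alpha\leq\pi/2$. This is also where your thresholds go astray: for $n=2k$ cylinders the neighbor angle is $\alpha=2\pi/n=\pi/k$ (not $\pi/n$), so $n\geq6$ gives $\alpha\leq\pi/3<\pi/2$ and the deformation exists with room to spare, while the four-cylinder case is the borderline $\alpha=\pi/2$; the values $\pi/3$ and $\pi/6$ that you conjecture as cutoffs play no role. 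A final, smaller gap: for $2k>6$ cylinders the symmetry group does not reduce all pairwise distances to the three functions $d_{AB},d_{AD},d_{BD}$---there are additional orbits of pairs (e.g.\ second neighbors within the same family), each of which is initially parallel and hence has its own direction-dependent limit; these must be checked (they turn out to be dominated by the $AB$ constraint, which is what the paper's brief remark about other distances being ``not relevant'' refers to), not dismissed by symmetry alone.
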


\begin{proof}
Let us consider the curve
\begin{equation}
\varphi=\sum_{j>0}\varphi_{j}t^{j}\ ,\ \delta=\sum_{j>0}\delta_{j}
t^{j}\ ,\ \varkappa=\sum_{j>0}\varkappa_{j}t^{j}\ , \label{d6c6}
\end{equation}
and study the expansions of the distances $d_{\ast\ast}^{2}$ in $t.$ The
coefficient in $t^{k}$, $k=0,1,...$,
is denoted by $\left[  d_{\ast\ast}^{2}\right]  _{k}$. We have
\[
\left[  d_{AB}^{2}\right]  _{0}=\frac{4\delta_{1}^{2}\sin(\alpha)^{2}}
{\delta_{1}^{2}+\varphi_{1}^{2}}\ .
\]
This is greater than or equal to $4\sin(\alpha/2)^{2}$
iff
\begin{equation}
\delta_{1}^{2}\sin(\alpha)^{2}\geq(\delta_{1}^{2}+\varphi_{1}^{2})\sin
(\alpha/2)^{2}\ . \label{d6c7}
\end{equation}
Next,
\[
\left[  d_{AD}^{2}\right]  _{0}=\left[  d_{BD}^{2}\right]  _{0}=4\sin
(\alpha/2)^{2}
\]
and
\[
\left[  d_{AD}^{2}\right]  _{1}=-4\varkappa_{1}\sin(\alpha)\ ,\ \left[
d_{BD}^{2}\right]  _{1}=4\varkappa_{1}\sin(\alpha)\ .
\]
For both distances to weakly grow, we have to set $\varkappa_{1}=0$. Then
\[
\left[  d_{AD}^{2}\right]  _{2}=-\sin(\alpha)\left(  2\delta_{1}\varphi
_{1}+4\varkappa_{2}+2\delta_{1}\varphi_{1}\cos(\alpha)+\sin(\alpha)(\delta
_{1}^{2}-\varphi_{1}^{2})\right)  \ ,
\]
\[
\left[  d_{BD}^{2}\right]  _{2}=\sin(\alpha)\left(  2\delta_{1}\varphi
_{1}+4\varkappa_{2}+2\delta_{1}\varphi_{1}\cos(\alpha)-\sin(\alpha)(\delta
_{1}^{2}-\varphi_{1}^{2})\right)  \ .
\]
Both of these are positive ($\sin(\alpha)>0$) iff
\[
-2\delta_{1}\varphi_{1}(1+\cos(\alpha))+(\delta_{1}^{2}-\varphi_{1}^{2}
)\sin(\alpha)\leq4\varkappa_{2}\
\]
and
\[
4\varkappa_{2}\leq-2\delta_{1}\varphi_{1}(1+\cos(\alpha))-(\delta_{1}
^{2}-\varphi_{1}^{2})\sin(\alpha)\ .
\]
This can be solved for $\varkappa_{2}$ if
\[
-2\delta_{1}\varphi_{1}(1+\cos(\alpha))+(\delta_{1}^{2}-\varphi_{1}^{2}
)\sin(\alpha)\leq-2\delta_{1}\varphi_{1}(1+\cos(\alpha))-(\delta_{1}
^{2}-\varphi_{1}^{2})\sin(\alpha)
\]
or
\begin{equation}
\delta_{1}^{2}\leq\varphi_{1}^{2}\ . \label{d6c8}
\end{equation}
The inequalities (\ref{d6c7}) and (\ref{d6c8}) are compatible iff
\[
\cos(\alpha/2)^{2}\geq1/2\ \ \text{or}\ \ \alpha\leq\pi/2\ .
\]

For $\alpha<\pi/2$ this analysis is sufficient to show that there is room
for the cylinders $A$, $B$ and $D$ to grow. It is not too difficult to see
that the other values of distances between pairs of cylinders in our
configuration of $2n$ cylinders are not relevant. Thus, our Proposition is
proved. In particular, this analysis proves also the infinitesimal version of
our Theorem \ref{Main}.
\end{proof}

\vskip .3cm 
For $\alpha=\pi/2$ (the case of four cylinders) we find
$\delta_{1} ^{2}=\varphi_{1}^{2}$ and $4\varkappa_{2}=-2\delta_{1}\varphi_{1}
$, so further analysis is needed. It turns out that there is only one possible
motion here: the cylinders $A$ and $D$ stay parallel, the remaining two stay
parallel as well, so, up to a global rotation of all four cylinders, one
parallel pair is fixed while the other one rotates. We will show this in
Subsection \ref{alpi4}.

\vskip .2cm
Also, we have analyzed another strategy of unlocking, when the family of
possible motions $C_{3}\left(  \varphi,\delta,\varkappa\right)  $ is replaced
by $\bar{C}_{3}\left(  \varphi,\delta,\varkappa\right)  =\{ A,B,D\}$ where
\[
\textstyle{ A=\left[  \left(  \varphi,\frac{\alpha}{2}-\varkappa\right)
,\uparrow_{\delta}\right]  ,B=\left[  \left(  \varphi,\frac{5\alpha}
{2}-\varkappa\right)  ,\uparrow_{\delta}\right]  , D=\left[  \left(
-\varphi,\frac{3\alpha}{2}+\varkappa\right)  ,\uparrow_{-\delta}\right]  \ . }
\]
The difference here is the change of $\delta$ to $-\delta$ for the cylinder
$D$.
This other strategy corresponds to a different embedding of the symmetry group
$\mathbb{D}_{3}$ (in case of $n=3$, i.e. $\alpha=\frac{\pi}{3}$) in $O\left(
3\right)  .$

\vskip .2cm
Obviously, the function $d_{AB}^{2}$ stays the same, while
\[
d_{AD}^{2}=\frac{4U^{2}S^{2}(1+T^{2})^{2}}{(S^{2}+T^{2})(1-S^{2}+U^{2}
+T^{2}U^{2})},\ \ d_{BD}^{2}=d_{AD}^{2}|_{\gamma\rightarrow\bar{\gamma}
,\delta\rightarrow-\delta}.
\]
For the curve (\ref{d6c6}), we have
\[
\left[  d_{AD}^{2}\right]  _{0}=\left[  d_{BD}^{2}\right]  _{0}=\frac
{4\varphi_{1}^{2}\sin(\alpha/2)^{2}}{\delta_{1}^{2}+\varphi_{1}^{2}}\ .
\]
This is greater than or equal to $4\sin(\alpha/2)^{2}$ if $\varphi_{1}^{2}
\geq\delta_{1}^{2}+\varphi_{1}^{2}$ which implies $\delta_{1}=0$. Then
(\ref{d6c7}) implies $\varphi_{1}=0$ so for $0<\alpha<\pi$ the cylinders
cannot be unlocked using this other strategy.

\subsection{Four cylinders \label{alpi4}}

The initial position of four cylinders is shown on Figure
\ref{four cylinders initial postion}.

\begin{figure}[th]
\centering
\includegraphics[scale=0.226]{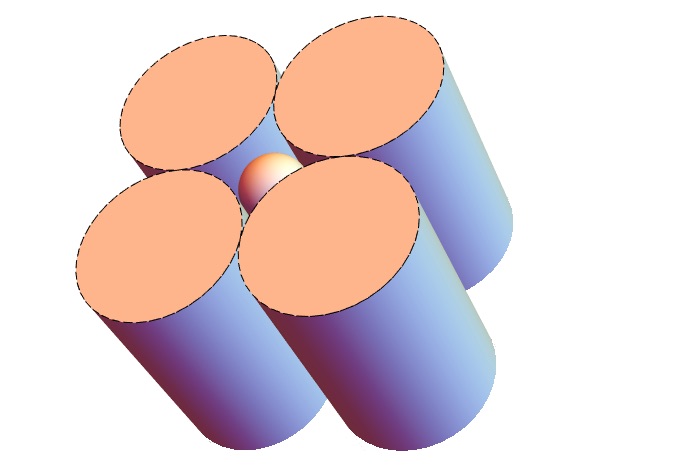} \caption{Initial
position}
\label{four cylinders initial postion}
\end{figure}

\begin{proposition}
The configuration of four parallel non-intersecting cylinders of radius
$r=1+\sqrt{2}$ touching the unit ball, being not rigid, is not unlockable in
our regime.
\end{proposition}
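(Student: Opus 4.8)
The plan is to carry the distance computations (\ref{d6c5}) and (\ref{d6c2}) to the value $\alpha=\pi/2$ and to extract an exact linear relation among the three relevant squared distances which makes it impossible to open all the gaps at the same time. At $\alpha=\pi/2$ we have $\sin\alpha=1$, $\cos\alpha=0$, so
\[
d_{AB}^{2}=\frac{4(1-S^{2})T^{2}}{S^{2}+T^{2}}\ ,\qquad
d_{AD}^{2}=\frac{4(ST+U)^{2}}{1-S^{2}+T^{2}+U^{2}+2STU}\ ,
\]
with $d_{BD}^{2}$ obtained, as prescribed after (\ref{d6c2}), by the substitution $\gamma\to\bar\gamma,\ \delta\to-\delta$. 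What makes $n=2$ special is that the generic angle relation (\ref{d6c13}) degenerates: since $\bar\gamma=\gamma+\tfrac{\pi}{2}$, the two longitudinal tangents obey simply $U\bar U=1$.

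Because the touching radius is $r=1+\sqrt2$ and $d_{AD}=d_{BD}=\sqrt2$ initially, unlocking in our regime would require all three squared distances to rise strictly above $2$. Writing each $d_{\ast\ast}^{2}-2$ over its (strictly positive) denominator, the signs are governed by
\[
P_{AB}=T^{2}(1-2S^{2})-S^{2},\quad P_{AD}=U^{2}+2STU+\beta,\quad P_{BD}=\bar U^{2}-2ST\bar U+\beta,
\]
where $\beta=2S^{2}T^{2}+S^{2}-T^{2}-1=-(1+P_{AB})$. Substituting $\bar U=1/U$, I expect the three polynomials to satisfy the identity
\[
P_{AD}+U^{2}P_{BD}+(U^{2}+1)\,P_{AB}=0\ .
\]
Its coefficients $1,\ U^{2},\ U^{2}+1$ are non-negative with $U^{2}+1>0$, so $P_{AB},P_{AD},P_{BD}$ can never be simultaneously positive. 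Hence on the whole $\alpha=\pi/2$ regime at least one of $d_{AB},d_{AD},d_{BD}$ stays $\le\sqrt2$; the minimal distance never exceeds $\sqrt2$, and the configuration cannot be unlocked.

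It remains to account for the movability, which does not conflict with the above. On one hand, the configuration is moved by the parallel-pair motion — one pair of cylinders held fixed while the other rotates about the appropriate normal axis — exactly as in the non-rigidity of $C_{6}$ explained in the Introduction. On the other hand, the equality case of the identity, $\min(d_{AB},d_{AD},d_{BD})=\sqrt2$, forces all three $P$'s to vanish (the weights being positive), which gives $T^{2}=S^{2}/(1-2S^{2})$ together with $U^{2}+2STU-1=0$, a one-parameter locus. Tracing this locus back to the generators should confirm that keeping all contacts at the value $\sqrt2$ leaves precisely this single motion, along which $D\equiv\sqrt2$, so that the configuration is genuinely movable but locked.

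The hard part is the bookkeeping rather than the algebra. One must fix orientations so that the collapse $U\bar U=1$ holds with the sign of $\bar U$ actually entering $d_{BD}^{2}$ — this is exactly what produces the cancellation, whereas the opposite sign leaves a stray term $4STU$ on the right and destroys the argument. One must also handle the coordinate degeneracy at $S=T=0$, where $A$ and $B$ become parallel and $d_{AB}^{2}$ is recovered only as a direction-dependent limit of the skew-line formula, so that the inequality $D\le\sqrt2$ is read correctly at the initial point. Finally, matching the equality locus to the explicit parallel-pair rotation and verifying that it keeps the cylinders non-intersecting is the last, most concrete, step.
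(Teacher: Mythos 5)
Your proposal is correct and is essentially the paper's own proof: your identity $P_{AD}+U^{2}P_{BD}+(U^{2}+1)P_{AB}=0$ is exactly the paper's step of summing the cleared-denominator inequalities (\ref{d6c10}), which together with (\ref{d6c9}) forces $Q=0$, and your equality locus $T^{2}(1-2S^{2})=S^{2}$, $U^{2}+2STU=1$ is the paper's trajectory (\ref{d6c11})--(\ref{d6c12}). The sign issue you flag resolves in your favor: although the text states $\bar{U}=-1/U$, the displayed four-cylinder formula for $d_{BD}^{2}$ (consistent with Section 4's convention $\bar{U}=-\tan(\varkappa+\frac{\alpha}{2})$) corresponds precisely to $U\bar{U}=1$, so the cancellation your argument relies on is the one the paper itself computes with.
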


\begin{proof}
For $n=4,$ the angle $\alpha=\frac{\pi}{2},$ so $\bar{U}=-1/U$, see $\left(
\ref{d6c3b}\right)  ,$ and
\[
d_{AB}^{2}=\frac{4\left(  1-S^{2}\right)  T^{2}}{S^{2}+T^{2}}\ ,
\]
\[
d_{AD}^{2}\!=\!\frac{4(ST+U)^{2}}{1-S^{2}+T^{2}+2STU+U^{2}}\ ,\ d_{BD}^{2}
\!=\!\frac{4(-1+STU)^{2}}{1-2STU+U^{2}-S^{2}U^{2}+T^{2}U^{2}}\ .
\]
Let $Q:=T^{2}-S^{2}-2S^{2}T^{2}.$ The system of inequalities
\[
d_{AB}^{2}\geq2\ ,
\]
\[
d_{AD}^{2}\geq2\ ,\ d_{BD}^{2}\geq2
\]
is equivalent to
\begin{equation}
Q\geq0\ , \label{d6c9}
\end{equation}
\begin{equation}
U^{2}+2STU\geq1+Q\ ,\ 1-2STU\geq U^{2}(1+Q)\ . \label{d6c10}
\end{equation}
The sum of last two inequalities is
\[
U^{2}+1\geq(U^{2}+1)(1+Q),\ \ \text{hence}\ \ 1+Q\leq1\ ,
\]
which, along with (\ref{d6c9}) gives
\begin{equation}
Q=0\ \ ,\ \ \text{or}\ \ S^{2}=\frac{T^{2}}{1+2T^{2}}\ . \label{d6c11}
\end{equation}
Now the inequalities (\ref{d6c10}) become
\[
U^{2}+2STU\geq1\ ,\ 1-2STU\geq U^{2},
\]
therefore
\begin{equation}
U^{2}+2STU=1\ . \label{d6c12}
\end{equation}
Eqs. (\ref{d6c11}) and (\ref{d6c12}) define uniquely the trajectory, depicted
in Figure \ref{four cylinders motion}.
\end{proof}

\begin{figure}[th]
\centering
\includegraphics[scale=0.226]{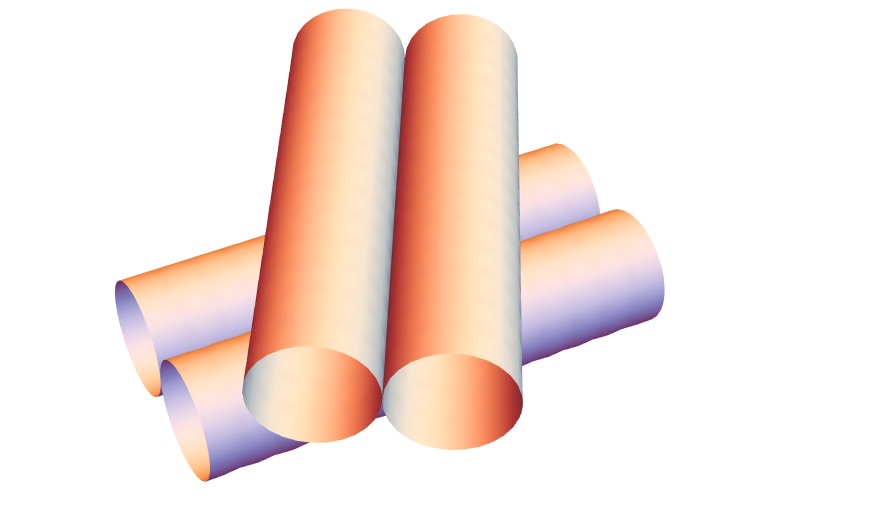}\caption{Motion of four
cylinders}
\label{four cylinders motion}
\end{figure}

\vskip .2cm \noindent\textbf{Conjecture.} We believe that these are all
possible positions of four cylinders of radius $r=1+\sqrt{2}$ touching the
unit ball.

\vskip.2cm If that would be the case, then, obviously, one could not put 5
non-intersecting cylinders of radius $r=1+\sqrt{2}$ in contact with unit ball,
thus answering the analogue of the initial $n=6$ question of Kuperberg. But
this last statement can be proven independently of the above conjecture.

\begin{proposition}
It is not possible to place five non-intersecting cylinders of radius
$r=1+\sqrt{2}$ in such a way that all of them touch a unit ball.
\end{proposition}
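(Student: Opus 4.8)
The plan is to translate the statement entirely into the tangent-line language of Section 2 and then reduce it to a combinatorial fact about five unit vectors on $\mathbb{S}^2$. For $r=1+\sqrt2$ the touching relation $(\ref{11})$ gives $d=\tfrac{2r}{1+r}=\sqrt2$; since $r\mapsto\tfrac{2r}{1+r}$ is increasing, two cylinders of radius $1+\sqrt2$ with generators $u_i=(x_i,\tau_i)$ and $u_j=(x_j,\tau_j)$ are disjoint if and only if $d_{u_iu_j}\ge\sqrt2$. Thus a packing of five such cylinders is the same as five tangent lines to $\mathbb{S}^2$ that are pairwise at distance at least $\sqrt2$, and I must show these cannot exist. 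The first remark is that $d_{u_iu_j}\le|x_j-x_i|$ always (the distance between two lines does not exceed the distance between their feet $x_i,x_j$), so $|x_j-x_i|\ge\sqrt2$, i.e. $(x_i,x_j)\le0$ for all $i\ne j$: \emph{the five tangency points are pairwise non-acute}.

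The crucial step is to extract rigidity from the equality case. If $(x_i,x_j)=0$ then $|x_j-x_i|=\sqrt2$, and combined with $\sqrt2\le d_{u_iu_j}\le|x_j-x_i|$ this forces $d_{u_iu_j}=\sqrt2$ with the common-perpendicular direction $\hat n_{ij}=\frac{\tau_i\times\tau_j}{|\tau_i\times\tau_j|}$ collinear with $x_j-x_i$. Since $\hat n_{ij}\perp\tau_i$ and $\tau_i\perp x_i$, this yields $\tau_i\perp x_i,x_j$, and likewise $\tau_j\perp x_i,x_j$; as $x_i\perp x_j$, both directions equal $\pm\,x_i\times x_j$. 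Hence, \emph{whenever two tangency points are orthogonal the two lines are forced to be parallel, with direction $\pm\,x_i\times x_j$.}

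Next I classify the five points. Five unit vectors with pairwise non-positive inner products cannot lie in a plane (in $\mathbb{R}^2$ at most four vectors are pairwise non-acute), so they span $\mathbb{R}^3$. Splitting them by the graph whose edges are the strictly negative pairs, the connected components span mutually orthogonal subspaces, and an M-matrix argument shows that a connected component spanning a $d'$-dimensional space has at most $d'+1$ members: its Gram matrix is positive semidefinite with non-positive off-diagonal entries and connected support, hence has one-dimensional kernel. Counting dimensions against sizes leaves exactly two types: (a) the octahedron minus a vertex, $\{\pm a,\pm b,c\}$ with $a,b,c$ orthonormal; and (b) an antipodal pair $\{c,-c\}$ together with three vectors $u_1,u_2,u_3$ in the plane $c^{\perp}$, pairwise non-acute and spanning it. In each type I exhibit a vertex with two orthogonal neighbours whose forced directions disagree. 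In type (a) the point $c$ is orthogonal to $a$ and $b$, so its line must be parallel to $c\times a=\pm b$ and to $c\times b=\mp a$ at once — impossible. In type (b) the point $c$ is orthogonal to all $u_i$; since no two $u_i$ can be antipodal (that would force the third out of the plane $c^{\perp}$), $u_1$ and $u_2$ are independent, so $c\times u_1$ and $c\times u_2$ are non-parallel, again demanding two directions for the line at $c$. Either way the rigidity statement is contradicted, so five cylinders cannot touch the ball.

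I expect the main obstacle to be the rigidity step together with making the classification airtight. The whole argument hinges on the observation that an orthogonal pair of tangency points is \emph{not} slack: it pins $d_{u_iu_j}=\sqrt2$ exactly and fixes the two line directions, turning a metric inequality into a rigid parallelism constraint. The remaining care is in verifying that every admissible five-point configuration really reduces to type (a) or (b), for which the combinatorial lemma on pairwise non-positive vectors (connected component of size $\le d'+1$) must be established cleanly; once that is in place the direction conflict is immediate in both cases.
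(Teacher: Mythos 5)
Your argument is correct in substance, but it takes a genuinely different route from the paper's. The paper inscribes a ball of radius $1+\sqrt{2}$ in each cylinder, notes that five such balls touching the unit ball have pairwise non-acute tangency points, and then invokes the structure of five-ball configurations (leaning on \cite{KKLS} and a short geometric case analysis) to extract a triple congruent $\operatorname{mod}SO(3)$ to $\{\mathsf{N},\mathsf{S},\text{equatorial point}\}$; the three cylinders through such a triple are forced to be parallel, and parallel cylinders leave a uniquely defined place for only one further cylinder, contradicting five. You stay entirely in the tangent-line picture of Section 2: pairwise non-acuteness comes from $d_{u_iu_j}\le|x_j-x_i|$, the rigidity lemma (orthogonal tangency points force both lines to be parallel to $x_i\times x_j$) comes from the equality case, and --- this is the main divergence --- you replace the appeal to the ball-configuration structure by a self-contained classification of five pairwise non-acute unit vectors in $\mathbb{R}^3$, via the Gram-matrix/Perron--Frobenius lemma (a connected component of the strict-obtuseness graph spanning a $d'$-dimensional space has at most $d'+1$ members), and you finish with a direction conflict at a single vertex rather than a count of available positions. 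What your route buys is completeness and independence of outside references; what the paper's route buys is brevity, since the combinatorial work is delegated to the twelve-spheres analysis. The Perron--Frobenius lemma you cite is true and your sketch (positive semidefinite Gram matrix, non-positive off-diagonal entries, connected support, hence kernel of dimension at most one) is the standard correct proof, but it does need to be written out for the argument to be complete.

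Two small repairs are needed. First, in the rigidity step the common-perpendicular direction $\tau_i\times\tau_j$ is undefined when the two lines are parallel; argue instead that if the feet realize the minimal distance between the lines, then the first-order conditions at that minimum give $(x_j-x_i)\perp\tau_i$ and $(x_j-x_i)\perp\tau_j$, which covers the parallel and non-parallel cases uniformly and yields $\tau_i=\pm\,x_i\times x_j$ and $\tau_j=\pm\,x_i\times x_j$ directly. Second, your parenthetical in type (b) is wrong: if $u_1=-u_2$ then $u_3$ is forced to be orthogonal to both, but it still lies in the plane $c^{\perp}$ (that configuration is just type (a) in disguise), so antipodality among the $u_i$ is not excluded for the reason you give. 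The argument survives without it: since $u_1,u_2,u_3$ span the two-dimensional plane $c^{\perp}$, some two of them are linearly independent, and those two already give the conflicting forced directions $c\times u_i$, $c\times u_j$ for the line at $c$.
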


\begin{proof}
Suppose the opposite. Consider the corresponding configuration of 5 cylinders.
Let us inscribe into them 5 balls of the same radius $r=1+\sqrt{2},$ each
touching the central unit ball. As we will explain in the next paragraphs (see also
\cite{KKLS}), any configuration of five non-intersecting balls of
radius $r=1+\sqrt{2}$ touching the central unit ball contains a triple, which,
$\operatorname{mod}SO\left(  3\right)$, is formed by a ball on the North
pole, a ball on the South pole and a ball on the intersection of Greenwich and
the equator. The three non-intersecting cylinders of radius $r=1+\sqrt{2}$, containing
these three balls, have to be parallel (and perpendicular to Greenwich), which
leaves a uniquely defined place for just one more cylinder.

\vskip .2cm
In order to understand the configuration manifold of five non-intersecting
balls of radius $r=1+\sqrt{2}$ touching the central unit ball, let us position
one of them at the South. Consider the set {\sf T} of the three balls closest to this
$\mathsf{S}$ ball. Their centers lie in the (closed) northern hemisphere.

\vskip .2cm
Consider first the case when no ball from {\sf T} touches the two others. If
at least one of them has its center not on the equatorial plane, then there is
no room left for the fifth ball. Therefore all three centers must be on the equator
plane, and then the fifth ball is fixed to be the $\mathsf{N}$ ball. Our three
equatorial balls are then free to use the equatorial plane. (This shows that
the dimension of the configuration manifold of our 5 balls is two,
$\operatorname{mod}SO\left(  3\right)  $.)  The $\mathsf{N}$ ball, the
$\mathsf{S}$ ball and any one from the equatorial balls make then the triple sought.

\vskip .2cm
In the remaining case, when one ball from {\sf T} touches the other two,
the triple itself forms a configuration of the type needed.
\end{proof}

\vskip .4cm
\noindent{\bf Remark.} Firsching \cite{F}, building on the work \cite{BW}, proves a stronger result:
five disjoint infinitely long cylinders with radius $r>1.89395$ cannot touch a unit ball.

\section{Conclusion}

In this paper we were attempting to understand better the question of W.
Kuperberg about the maximum number of non-intersecting equal (infinite)
cylinders of radius $r\geq1$ touching the unit ball in $\mathbb{R}^{3}.$ The
open conjecture is that this number is 6.
We were able to clarify a related question of how large the radius $r$ of six
cylinders can be in order that the non-intersection condition is satisfied.

\vskip .2cm
We believe that the \textit{record} configuration $C_{6}(\varphi_{\mathfrak{m}},\varkappa_{\mathfrak{m}},\delta_{\mathfrak{m}})$ we found,
which has all the relevant distances equal to $\sqrt{\frac{12}{11}},$ gives 
the best possible value for $r$, see $\left(  \ref{37}\right)  .$

\vskip .2cm
In the forthcoming papers \cite{OS,OS2} we shall investigate the local maximality properties of several configurations of six cylinders.

\vskip .3cm
It is interesting to note that all the angles describing the configuration
$C_{6}(\varphi_{\mathfrak{m}},\varkappa_{\mathfrak{m}},\delta_{\mathfrak{m}})$
are pure geodetic, in the sense of \cite{CRS}: an angle $\alpha$ is pure
geodetic if the square of its sine
is rational. Formally it is explained as follows: for
any rational $x$ the formulas (\ref{traj1}), (\ref{traj2}) and (\ref{traj3})
define pure geodetic angles and our record configuration is attained at
$x=1/2$.

\vskip .3cm\noindent{\footnotesize {\textbf{Acknowledgements.} Part of the
work of S. S. has been carried out in the framework of the Labex Archimede
(ANR-11-LABX-0033) and of the A*MIDEX project (ANR-11-IDEX-0001-02), funded by
the ``Investissements d'Avenir" French Government programme managed by the
French National Research Agency (ANR). Part of the work of S. S. has been
carried out at IITP RAS. The support of Russian Foundation for Sciences
(project No. 14-50-00150) is gratefully acknowledged by S. S. The work of O.
O. was supported by the Program of Competitive Growth of Kazan Federal
University and by the grant RFBR 17-01-00585.}}

\end{document}